\PassOptionsToPackage{table}{xcolor}
\documentclass[onefignum,onetabnum,a4paper]{siamart190516}
\pdfoutput=1

\usepackage[utf8]{inputenc}
\usepackage[english]{babel}
\usepackage{amsmath, amsfonts, dsfont, gensymb, physics, upgreek, siunitx}
\usepackage{tikz, pgfplots, graphicx, epstopdf, float, hyperref, xcolor}
\usepackage[caption=false]{subfig}
\usepackage{enumitem, verbatim, listings}
\usepackage{mathtools, stmaryrd}
\usepackage{booktabs, array, ragged2e}
\usepackage[export]{adjustbox}
\usepackage{xspace}

\usetikzlibrary{trees}
\graphicspath{{./figures}}
\epstopdfsetup{outdir=./}

\lstset{basicstyle=\footnotesize\ttfamily}

\definecolor{paired1}{HTML}{a6cee3}
\definecolor{paired2}{HTML}{1f78b4}
\definecolor{paired3}{HTML}{b2df8a}
\definecolor{paired4}{HTML}{33a02c}
\definecolor{paired5}{HTML}{fb9a99}
\definecolor{paired6}{HTML}{e31a1c}
\definecolor{paired7}{HTML}{fdbf6f}
\definecolor{paired8}{HTML}{ff7f00}
\definecolor{paired9}{HTML}{cab2d6}
\definecolor{paired10}{HTML}{6a3d9a}

\hypersetup{
    colorlinks,
    linkcolor={red!50!black},
    citecolor={blue!50!black},
    urlcolor={blue!80!black}
}

\pgfplotsset{
  log x ticks with fixed point/.style={
      xticklabel={
        \pgfkeys{/pgf/fpu=true}
        \pgfmathparse{exp(\tick)}%
        \pgfmathprintnumber[fixed relative, precision=3]{\pgfmathresult}
        \pgfkeys{/pgf/fpu=false}
      }
  },
  log y ticks with fixed point/.style={
      yticklabel={
        \pgfkeys{/pgf/fpu=true}
        \pgfmathparse{exp(\tick)}%
        \pgfmathprintnumber[fixed relative, precision=3]{\pgfmathresult}
        \pgfkeys{/pgf/fpu=false}
      }
  }
}

\newcommand{\reals}{\mathbb{R}}

\newcommand{\md}{\mathrm{d}}
\renewcommand{\div}{\mathrm{div}}
\newcommand{\bigo}[1]{\mathcal{O}(#1)}

\newcommand{\jump}[1]{\left\llbracket #1 \right\rrbracket}
\newcommand{\avg}[1]{\left\{ #1 \right\}}

\renewcommand{\vec}[1]{\mathbf{#1}}

\DeclareMathOperator{\diag}{diag}
\DeclareMathOperator{\spn}{span}

\newcommand{\eye}{{\mathds{1}}}
\newcommand{\kron}{\otimes}

\newcommand{\be}{\vec{e}}
\newcommand{\bu}{\vec{u}}
\newcommand{\bv}{\vec{v}}
\newcommand{\bw}{\vec{w}}
\newcommand{\bn}{\vec{n}}

\newcommand{\bx}{\vec{x}}

\newcommand{\bB}{\vec{B}}

\newcommand{\uf}{\underline{f}}
\newcommand{\ug}{\underline{g}}
\newcommand{\up}{\underline{p}}
\newcommand{\uq}{\underline{q}}
\newcommand{\ur}{\underline{r}}
\newcommand{\uu}{\underline{u}}

\newcommand{\ubu}{\underline{\bu}}

\newcommand{\ubf}{\underline{\vec{f}}}

\newcommand{\Jac}{\mathrm{D}F}
\newcommand{\mesh}{\mathcal{T}_h}
\newcommand{\betah}{\tilde{\beta}}



\title{A scalable and robust vertex-star relaxation for
high-order FEM\thanks{Submitted to the editors August 31, 2021.
      \funding{
         PDB was supported by the University of Oxford Mathematical Institute Graduate Scholarship.
         PEF was supported by EPSRC grants EP/V001493/1 and EP/R029423/1.
         We would also like to thank Lawrence Mitchell for his helpful advice 
         for the implementation in Firedrake. 
      }
   }
}

\author{Pablo D.\ Brubeck\thanks{
Mathematical Institute,
University of Oxford,
Oxford UK (\email{brubeckmarti@maths.ox.ac.uk})
}
\and Patrick E.\ Farrell\thanks{
Mathematical Institute,
University of Oxford,
Oxford UK (\email{patrick.farrell@maths.ox.ac.uk})
}
}

\headers{Sparse vertex-star relaxation for high-order FEM}{P.~D.~Brubeck and P.~E.~Farrell}

\begin{document}

\numberwithin{equation}{section}
\maketitle

\begin{abstract}
Pavarino proved that the additive Schwarz method with vertex patches and a
low-order coarse space gives a $p$-robust solver for symmetric and coercive
problems \cite{pavarino93}. However, for very high polynomial degree it is
not feasible to assemble or factorize the matrices for each patch.  In this
work we introduce a direct solver for separable patch problems that scales
to very high polynomial degree on tensor product cells.  The solver
constructs a tensor product basis that diagonalizes the blocks in the
stiffness matrix for the internal degrees of freedom of each individual
cell. As a result, the non-zero structure of the cell matrices is that of
the graph connecting internal degrees of freedom to their projection onto
the facets. In the new basis, the patch problem is as sparse as a low-order
finite difference discretization, while having a sparser Cholesky
factorization.  We can thus afford to assemble and factorize the matrices
for the vertex-patch problems, even for very high polynomial degree. In the
non-separable case, the method can be applied as a preconditioner by
approximating the problem with a separable surrogate. 
We demonstrate the approach by solving the Poisson equation 
and a $H(\mathrm{div})$-conforming interior penalty discretization of 
linear elasticity in three dimensions at $p = 15$.
\end{abstract}

\begin{keywords}
   preconditioning, high-order, tensor product, additive Schwarz, sparse Cholesky
\end{keywords}

\begin{AMS}
   65F08, 65N35, 65N55
\end{AMS}

\begin{DOI}
10.1137/21M1444187
\end{DOI}

\section{Introduction}

For problems with smooth solutions, high-order finite element methods offer
very good convergence properties, and in some cases they do not exhibit locking
phenomena found in low-order methods.  Moreover, there exist optimal
matrix-free algorithms for operator evaluation with high arithmetic intensity,
arising from data locality, that make
efficient use of modern parallel hardware architectures. 
Unfortunately, the conditioning of the stiffness
matrix is severely affected by the polynomial degree of the approximation.  In
order to obtain practical iterative solvers, we require good preconditioners.

Optimal solvers are often obtained from a multiplicative multigrid V-cycle
where the smoother consists of a domain decomposition method, such as additive
Schwarz with a particular space decomposition. The multigrid algorithm is then
accelerated by a Krylov subspace method, such as preconditioned conjugate
gradients (PCG). The choice of space decomposition in the relaxation is crucial
for robustness with respect to the cell size $h$, the polynomial degree $p$,
and parameters in the equation.

One of the cheapest relaxations, with $\bigo{p^d}$ computational cost, is
diagonal scaling, also known as point-Jacobi.  The diagonally preconditioned
Laplacian has a condition number of $\bigo{p^{2d-2}}$ \cite{maitre96}. This
implies that the number of PCG iterations, and therefore the number of residual
evaluations, is $\bigo{p^{d-1}}$, incurring a total cost of $\bigo{p^{2d}}$.
In order to minimize the time to solution,
it is reasonable to consider more expensive relaxation methods that converge in
fewer iterations. Ideally, we wish to balance the cost of applying the
relaxation with that of updating the residual.  On tensor product elements, the
latter operation can be done quickly in $\bigo{p^{d+1}}$ operations via the
sum-factorization. Sum-factorization breaks down the residual evaluation into
products of one-dimensional operators and diagonal scalings \cite{orszag80}. 

In 1993, Pavarino proved that the additive Schwarz method with a
vertex-centered space decomposition and an additive coarse space of
lowest-order ($p=1$) gives a robust solver with respect to $h$ and $p$ for
symmetric and coercive problems \cite{pavarino93}.  This type of space
decomposition is often referred to as generous overlap and is illustrated in
Figure \ref{fig:patches}.  We use the terminology of \cite{pcpatch} and refer
to the subdomains in this space decomposition as vertex-star patches, as they
are constructed by taking all the degrees of freedom (DOFs) on the topological
entities in the \textit{star} of each vertex.

The most straightforward implementation of a vertex-star solver involves the
assembly and direct factorization of the $\bigo{p^d} \times \bigo{p^d}$ patch
matrices (which are dense for Lagrange shape functions).  This becomes
prohibitively expensive at very high polynomial degrees, with the Cholesky
factorization of such a matrix requiring $\bigo{p^{3d}}$ operations. However, 
there exist bases for which the element matrices are sparse on affine cells,
such as the hierarchical Lobatto shape functions \cite{szabo91}. 
In this basis, the stiffness matrix has a 5-point stencil in 2D, and a much
larger 13-point stencil in 3D.

Efficient relaxation methods that are $p$-robust may arise from the
discretization of an auxiliary problem for which fast inversion techniques are
available. For a more general approach to auxiliary space techniques, we refer
to the work of Xu \cite{xu96}. In our context, the underlying PDE and/or the
domain can be replaced by those of a problem which is solvable by the method of
separation of variables.  The fast diagonalization method (FDM) \cite{lynch64}
is a $\bigo{p^{d+1}}$ direct factorization that breaks the problem down into a
sequence of one-dimensional subproblems.  

For the Poisson equation discretized on meshes with all cells Cartesian (all
internal angles are right angles), the vertex-star problems can be solved
directly with the FDM ~\cite{witte20}. Huismann et al.~\cite{huismann19}
introduced a remarkably fast solver with $\bigo{p^d}$ scaling on such meshes.
The linear system is statically condensed by elimination of the cell DOFs, and
the reduced system on the interface is solved with $p$-multigrid and a
restricted variant of the FDM onto the interface DOFs of a vertex-star.  Since
the statically condensed operator requires the exact inversion of the cell
matrices, their approach has no obvious extension to the unstructured,
non-Cartesian case.

The FDM can be applied as a relaxation by means of an auxiliary
problem that is separable, but this requires a tensor product grid discretization
of the patch, which is only possible when the cells are laid out in a tensor
product grid~\cite{witte20}.  When the cells are not Cartesian, the method of Witte et
al.~\cite{witte20} approximates the whole patch as a single Cartesian domain
and converges slowly even when the cells are slightly distorted.  On general
meshes, the patches may not have this structure, thus the FDM cannot be
directly applied on such patches. An example of a vertex-star patch to which
the FDM cannot be applied as a relaxation is shown in Figure \ref{fig:pentagon_patch}.

\begin{figure}[tbhp]
\captionsetup[subfloat]{margin=0pt,format=hang,singlelinecheck=true,justification=centering}
\centering
\quad\hfill
\subfloat[Vertex-star patch]{
   \includegraphics[height=0.23\textwidth,valign=c]{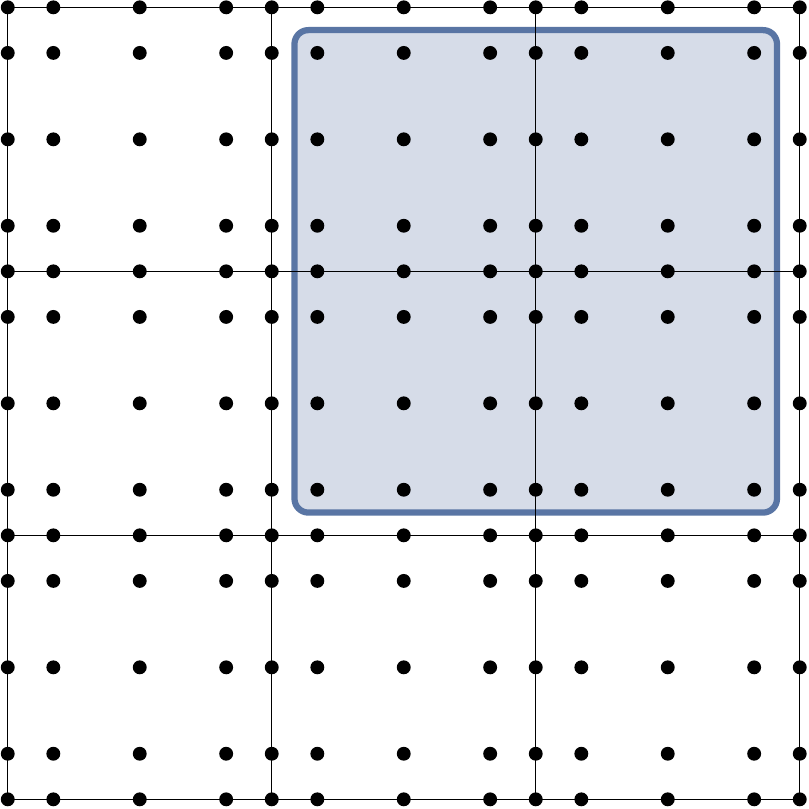}}
\hfill
\subfloat[Cell-centered patch]{
   \includegraphics[height=0.23\textwidth,valign=c]{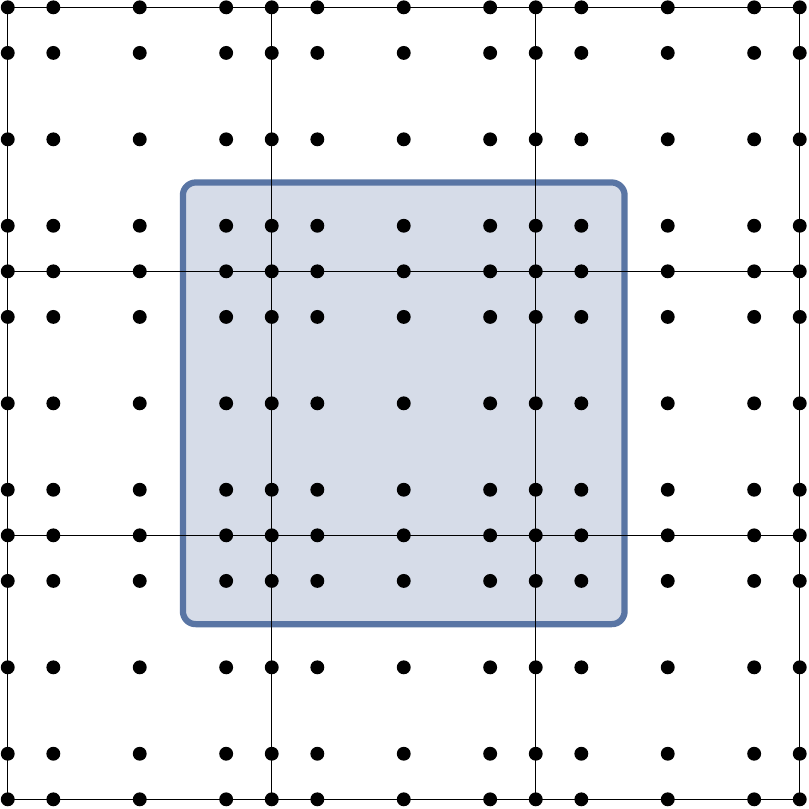}}
\hfill\quad

\caption{Subdomains for the additive Schwarz method on a regular mesh ($p=4$). In
combination with a low-order coarse grid, the
vertex-star patch gives a $p$-robust method for symmetric and coercive problems,
while the cell-centered patch does not.}
\label{fig:patches}
\end{figure}

A popular alternative in the literature has been to use cell-centered patches
with minimal overlap by including a few layers of DOFs of
the neighboring cells \cite{fischer00, remacle15, stiller17}.  This can be done
in such a way that every patch remains structured.  This kind of space
decomposition is more amenable to fast implementation, but does not give a
$p$-robust solver.  If the number of layers is fixed, then the measure of the
overlap region will decrease as $p$ is increased. Pavarino also proved that
when the overlap is not generous, the rate of convergence of the additive
Schwarz method will depend inversely on the overlap size \cite{pavarino07}. To
overcome this, Fischer and Lottes \cite{lottes05} applied a hybrid
$p$-multigrid/Schwarz method, in the context of a Poisson problem.  They
implemented several levels of $p$-multigrid to overcome the lack of
$p$-robustness of the cell-centered patches with minimal overlap.  The use of
multiple levels increases the overlap at the coarser levels with a minor impact on
the overall computational cost.  Cell-centered patches without overlap have also been
employed for non-symmetric problems~\cite{pazner18, pazner18ip, diosady19}.

\begin{figure}[tbhp]
\captionsetup[subfloat]{margin=0pt,format=hang,singlelinecheck=true,justification=centering}
\centering
\quad\hfill
\subfloat[Structured patch]{
   \includegraphics[height=0.23\textwidth,valign=c]{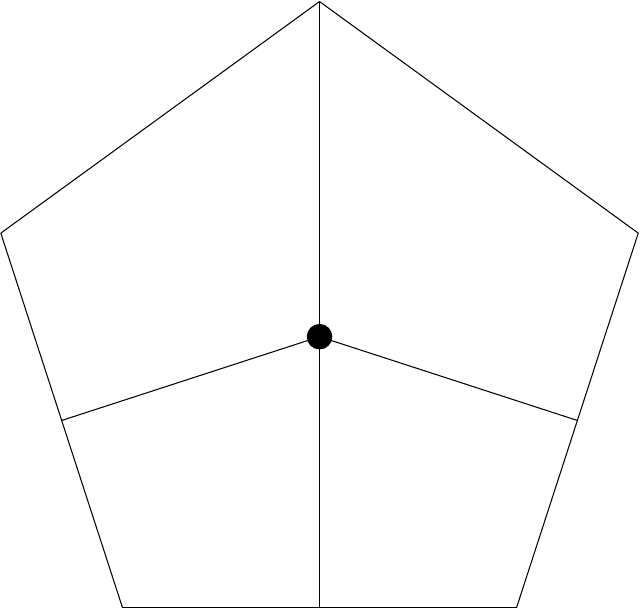}}
\hfill
\subfloat[Unstructured patch]{
   \includegraphics[height=0.23\textwidth,valign=c]{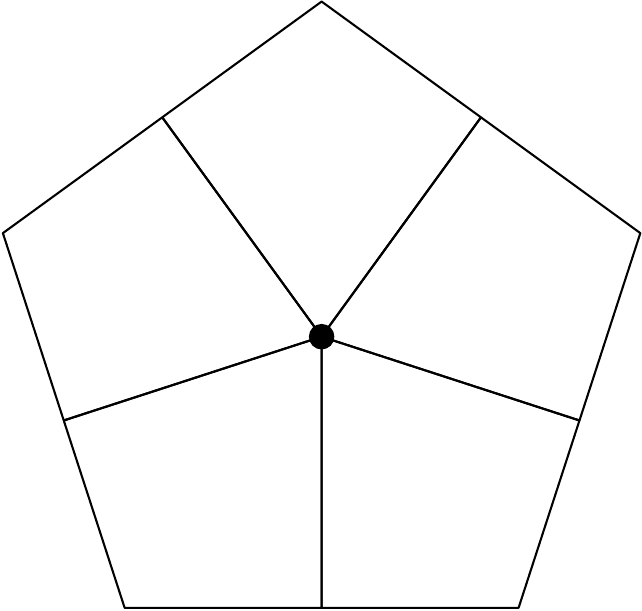}}
\hfill\quad

\caption{The FDM may be applied as a relaxation
only on vertex-star patches that are structured, 
i.e.\ where the cells are laid out in
a tensor product grid.}
\label{fig:pentagon_patch}
\end{figure}

Instead of replacing the vertex-star patches with cell-centered ones, the
alternative FEM-SEM preconditioner \cite{orszag80} rediscretizes the problem on each
vertex-star patch with $p=1$ on a GLL grid, a mesh with vertices at the DOFs of
the high-order space.  The theory behind this guarantees the spectral
equivalence between the differential operator discretized on the two spaces
\cite{canuto94}. Since low-order methods are naturally sparse, this
approach is not constrained to Cartesian cells and can deal properly with mixed
first derivatives that the FDM cannot handle. A downside of this approach is
that the Cholesky factors of the patch matrices are quite dense, limiting its
scalability to very high polynomial degree.  Computationally advantageous
approaches involve incomplete factorizations of the patch FEM-SEM matrices
\cite{pazner20}, or the use of algebraic multigrid on the global FEM-SEM
operator \cite{bello19}. 

In this work we develop a solver for vertex-star patches that scales to very
high polynomial degree. Our approach does not rely on a particular structure of
the patch. In particular, it applies to the patches shown in Figure
\ref{fig:pentagon_patch}.  The key idea is to use the FDM to numerically
construct a basis of functions on an interval that diagonalizes the interior
blocks of \emph{both} the stiffness and mass matrices in one dimension. When the
problem is assembled with respect to the associated tensor product basis, the
resulting stiffness and mass matrices are sparse, in the Cartesian case. In
particular, the total number of non-zeros is the same as that of a low-order
finite difference discretization of the Laplacian.  Moreover, fill-in in the
Cholesky factorization is only introduced for the interface DOFs, resulting in
very sparse Cholesky factors. The factorization requires $\bigo{p^{3(d-1)}}$
operations, while forward and back-substitution steps have a cost of
$\bigo{p^{2(d-1)}}$ operations that is optimal for $d \in \{2, 3\}$, in
contrast with the $\bigo{p^{3d}}$ and $\bigo{p^{2d}}$ costs of the na\"ive
approach.  A disadvantage of the approach is that the memory required scales
like $\bigo{p^{2(d-1)}}$, instead of the optimal $\bigo{p^d}$ required for
storing the solution. In the non-Cartesian case, we approximate the form with
one that is separable in the reference problem.  Robustness with respect to $h$
and $p$ should follow from the spectral equivalence between the forms, and
numerical experiments indicate that the approach is effective when the cells
are moderately deformed.

We demonstrate the effectiveness of our approach by applying it to a $H(\div)
\times L^2$ conforming discretization of a mixed formulation of incompressible
linear elasticity.  We present a sequence of problems of increasing complexity
building up to this.  In Section \ref{sec:poisson} we present the standard
$hp$-FEM formulation for the Poisson problem and construct a solver based on a
sparse discretization of an auxiliary locally separable PDE that employs the
numerically computed FDM basis. In Section \ref{sec:elasticity} we consider the
application of our solver to linear elasticity.  In the primal formulation,
although our approach can be applied to patch problems for the individual
components of displacement, we explain why it cannot be applied to the coupled
vector-valued problem, which is necessary for parameter-robustness in the
incompressible regime. We therefore consider a mixed formulation instead.
Developing a $p$-robust solver requires both a $p$-robust preconditioner and a
$p$-robust discretization, and for the latter we choose a $H(\div) \times L^2$
conforming approach. We then extend the method to symmetric interior-penalty
discontinuous Galerkin discretizations, required for the displacement block of
the mixed problem. We apply our relaxation to the displacement block of the
incompressible elasticity system in conjunction with block-preconditioned
Krylov methods. We end with conclusions in Section \ref{sec:conclusion}.

\section{Sparse Poisson solver\label{sec:poisson}}

We will first describe a solver for the Poisson equation on Cartesian cells,
which will subsequently be extended as a preconditioner for more general
symmetric coercive problems on non-Cartesian cells.

\subsection{Continuous Galerkin formulation}
We start from the standard weak formulation of the Poisson equation. Consider a
bounded domain $\Omega \subset \reals^d$, $d\in\{1,2,3\}$, and let $\Gamma_D \subseteq
\partial\Omega$ be the part of the boundary where the Dirichlet boundary
condition $\left.u\right|_{\Gamma_D} = u_0$ is prescribed. The problem is to
find $u-u_0$ in $V \coloneqq H^1_0(\Omega) = \{v\in H^1(\Omega),
\left.v\right|_{\Gamma_D}=0\}$ such that
\begin{equation}
a(v, u) = L(v) \quad \forall \, v \in V,
\end{equation}
where
\begin{equation}
a(v,u) \coloneqq \int_{\Omega} \nabla v \cdot \nabla u ~\md\bx, \quad
L(v) \coloneqq \int_{\Omega} v f ~\md\bx.
\end{equation}

The standard FEM discretization employs a mesh $\mesh = \{K\}$ of $\Omega$. In
this work we consider quadrilateral and hexahedral cells, so that each cell $K$
can be mapped with a diffeomorphism $F_K: \hat{K} \to K$ from the reference
hypercube $\hat{K} = \hat{\mathcal{I}}^d$, where $\hat{\mathcal{I}} = [-1, 1]$
is the reference interval. The approximate solution $u_h \in V_h$ is sought in
the space of piecewise continuous tensor product polynomials on each cell,
i.e.\ $V_h \coloneqq \mathrm{Q}_p (\Omega) \subset V$. We first define the
space of shape functions on $\hat{K}$
\begin{equation}
\mathrm{Q}_p(\hat{K}) \coloneqq \bigotimes_{j=1}^d \mathrm{P}_p(\hat{\mathcal{I}}),
\quad \mathrm{P}_p(\hat{\mathcal{I}}) \coloneqq \spn\left\{\hat{x}^j, 0\le j\le p\right\},
\end{equation}
and via composition with $F_K^{-1}$, we define
\begin{equation}
\mathrm{Q}_p(\Omega) \coloneqq \left\{v \in H^1_0(\Omega): \forall \, K\in \mesh \ \exists \ \hat{v}\in \mathrm{Q}_p(\hat{K}) \text{ s.t. }
\left. v\right|_{K} = \hat{v}\circ F_K^{-1}\right\}.
\end{equation}

Once we fix a basis $\{\phi_j\}_{j=1}^n$ for $V_h$, the approximate solution
is expanded as $u_h = \sum_{j=1}^n u_j \phi_j$. The resulting $n\times n$  
system of linear equations is
\begin{equation}
A \uu = \uf,
\end{equation}
where $[A]_{ij} = a(\phi_i, \phi_j)$ is the stiffness matrix, $\uu = (u_1,
\ldots, u_n)^\top$ is the vector of coefficients, and $\uf = (L(\phi_1),
\ldots, L(\phi_n))^\top$ is the load vector.

We recall the standard construction of the basis $\{\phi_j\}$~\cite{karniadakis13}. 
The basis is defined in terms of shape functions
$\{\hat{\phi}_j\}$ on $\hat{K}$. Given shape functions
$\{\hat{\phi}^\mathrm{1D}_j\}_{j=0}^p$ for $\mathrm{P}_p(\hat{\mathcal{I}})$, a
tensor product basis $\{\hat{\phi}_j\}$ for $\mathrm{Q}_p(\hat{K})$ can be
constructed as
\begin{equation}
\hat{\phi}_j(\hat{\bx}) = \prod_{k=1}^d \hat{\phi}^\mathrm{1D}_{j_k}(\hat{x}_k),
\end{equation}
where we have expanded $j = (j_1, \ldots, j_d)\in [0,p]^d$ as a multi-index.

The interval shape functions are decomposed into interface and interior
modes.  The interface modes have non-zero support on either endpoint of
$\hat{\mathcal{I}}$, while the interior modes vanish at the boundary of
$\hat{\mathcal{I}}$.  In multiple dimensions, the shape functions decompose
into interior, facet, edge, and vertex modes,  depending on how many 1D
interface functions are multiplied together.  To generate a $C^0$ basis, we
simply match the shape of individual interface modes.  Hence, $A$ will be block
sparse, since $[A]_{ij} = 0$ when $i$ and $j$ correspond to interior modes
supported on different cells.

For the interval shape functions, one standard choice is the
set of Lagrange polynomials on the Gau{\ss}--Lobatto--Legendre (GLL) nodes
$\{\hat{\xi}_i\}_{i=0}^p \subset [-1,1]$.  These nodes are the roots of
$(1-\hat{\xi}^2) P'_{p}(\hat{\xi})$, where $P_k(\hat{\xi})$ is the Legendre 
polynomial of degree $k$. The Lagrange polynomials $\{\ell_j(\hat{x})\}$
satisfy $\ell_j(\hat{\xi}_i) = \delta_{ij}$ by construction, 
\begin{equation}
\ell_j(\hat{x}) 
= \prod_{k=0, k\neq j}^p \frac{ \hat{x} - \hat{\xi}_k}
{\hat{\xi}_j - \hat{\xi}_k}, 
\quad j = 0, \ldots, p.
\end{equation}
Another useful basis is formed by the hierarchical Lobatto shape functions $\{l_j\}$,
which are constructed by augmenting the so-called bubble functions (integrated
Legendre polynomials) with linear Lagrange functions,
\begin{equation}
l_j(\hat{x}) =
\begin{cases}
(1 - \hat{x})/2 & \mbox{for } j = 0, \\
(1 + \hat{x})/2 & \mbox{for } j = p, \\
\int_{-1}^{\hat{x}} P_j(z) ~\md z & \mbox{for } j = 1,\ldots, p-1.
\end{cases}
\end{equation}
These two choices of shape functions are plotted in Figure \ref{fig:basis}.

The assembly of the stiffness matrix $A$ is described as follows.  On each cell
$K\in \mesh$ we define the cell stiffness matrix
$A^K\in\reals^{(p+1)^d\times (p+1)^d}$ in terms of the basis functions
$\{\phi^K_j\}$ that are supported on $K$, which are obtained from the reference
shape functions $\{\hat{\phi}_j\}$ via $\phi^K_j = \hat{\phi}_j \circ
F_K^{-1}$. Then, the cell stiffness matrices are
\begin{equation}
[A^K]_{ij}
= \int_K \nabla \phi^K_i \cdot \nabla \phi^K_j ~\md\bx. 
\end{equation}
The global stiffness matrix is then assembled via direct stiffness summation:
\begin{equation}
A = \sum_{K\in\mesh} R_K^\top A^K R_K,
\end{equation}
where $R_K\in \reals^{(p+1)^d\times n}$ is the Boolean restriction matrix from
the global DOFs to those local to cell $K$.

\begin{figure}[tbhp]
\centering
\quad\hfill
\subfloat[GLL, $\ell_j(\hat{x})$]{
   \includegraphics[height=0.24\textwidth,valign=c]{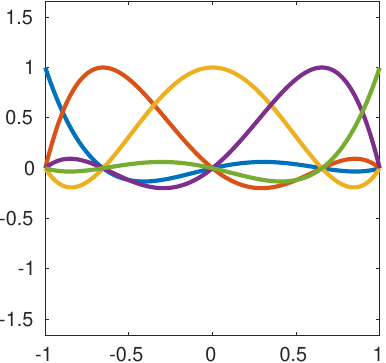}}
\hfill
\subfloat[GLL, $\hat{B}$]{
   \includegraphics[height=0.24\textwidth,valign=c]{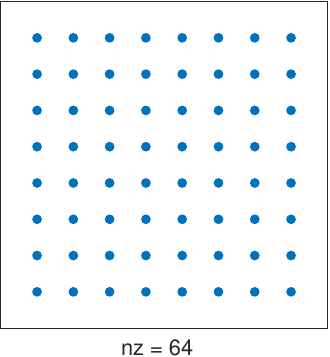}}
\hfill
\subfloat[GLL, $\hat{A}$]{
   \includegraphics[height=0.24\textwidth,valign=c]{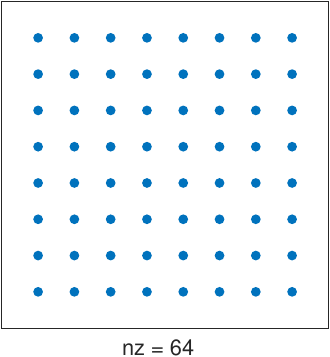}}
\hfill\quad

\quad\hfill
\subfloat[Hierarchical, $l_j(\hat{x})$]{
   \includegraphics[height=0.24\textwidth,valign=c]{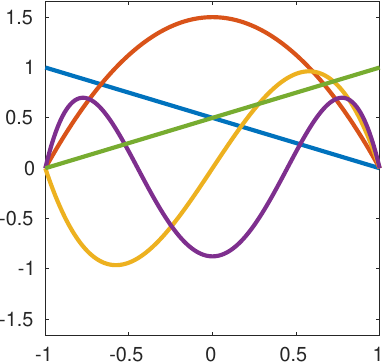}}
\hfill
\subfloat[Hierarchical, $\hat{B}$]{
   \includegraphics[height=0.24\textwidth,valign=c]{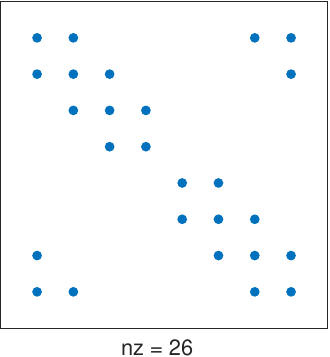}}
\hfill
\subfloat[Hierarchical, $\hat{A}$]{
   \includegraphics[height=0.24\textwidth,valign=c]{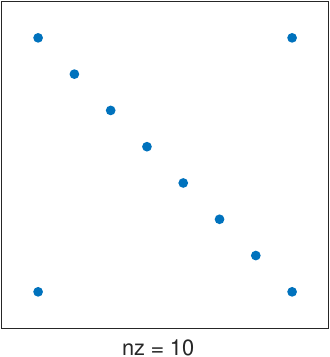}}
\hfill\quad

\caption{
Plots of the interval shape functions ($p=4$) and non-zero structure
of the mass and stiffness matrices on the reference interval ($p=7$).}
\label{fig:basis}
\end{figure}

\subsection{Tensor product structure on Cartesian cells}

If $d=1$ and $F_K$ is an affine mapping, then the cell stiffness
matrices are
\begin{equation}
[A^K]_{ij}
= \frac{1}{L^K} \int_{\hat{\mathcal{I}}} \hat{\phi}'_i\hat{\phi}'_j ~\md \hat{x}
= \frac{1}{L^K} [\hat{A}]_{ij}.
\end{equation}
Here $\hat{A} \in \reals^{(p+1)\times(p+1)}$ is the stiffness matrix on the reference interval,
and $L^K =\abs{K}/|\hat{\mathcal{I}}|$, where $\abs{K}$
denotes the measure of the cell $K$.

For $d=\{2,3\}$, we will first consider the case where $\Omega$ can be
tessellated with a mesh $\mesh$ consisting of Cartesian cells, i.e.\ the cells
are rectangular quadrilaterals or hexahedra (all internal angles are right
angles). In this idealized setting, the cell stiffness matrices are separable
in the reference coordinates
\begin{equation}
\label{eq:kron}
A^K = \begin{cases}
\mu^K_1 \hat{B}\kron \hat{A} +
\mu^K_2 \hat{A}\kron \hat{B} & \mbox{if~}d=2,\\
\mu^K_1 \hat{B}\kron \hat{B}\kron \hat{A} +
\mu^K_2 \hat{B}\kron \hat{A}\kron \hat{B} +
\mu^K_3 \hat{A}\kron \hat{B}\kron \hat{B} & \mbox{if~}d=3,
\end{cases}
\end{equation}
where
\begin{equation}
[\hat{B}]_{ij} = \int_{\hat{\mathcal{I}}} \hat{\phi}_i ~\hat{\phi}_j ~\md \hat{x}, 
\end{equation}
is the mass matrix on the reference interval, $\mu^K_j = (L^K_j)^{-2}
\prod_{i=1}^d L^K_i$, and $L^K_j$ is the length of $K$ along the $j$-th axis
divided by $|\hat{\mathcal{I}}|$. The symbol $\kron$ denotes the
Kronecker product, which for matrices $A\in \reals^{m\times n}, B \in
\reals^{r\times s}$, is defined as the block matrix
\begin{equation}
A\kron B = \begin{bmatrix}
a_{11} B & \cdots & a_{1n} B\\
\vdots &\ddots & \vdots\\
a_{m1} B & \cdots & a_{mn} B
\end{bmatrix} \in \reals^{rm\times sn}.
\end{equation}
It follows that if $A$ and $B$ are sparse, then $A\kron B$ is also sparse.

For the GLL basis $\{\ell_j\}$, both $\hat{A}$ and $\hat{B}$ are
dense, but these are sparse in the hierarchical basis $\{l_j\}$.  This
is illustrated in Figure \ref{fig:basis}. On affine cells, the hierarchical basis
yields a sparse stiffness matrix.  The bubble functions $\{l_j\}_{j=1}^{p-1}$,
satisfy $l'_j(\hat{x}) = P_j(\hat{x})$, and due to the orthogonality of the
Legendre polynomials, the interior block of $\hat{A}$ is diagonal. The only
off-diagonal non-zeros in $\hat{A}$ are due to the coupling between the
interface modes $l_0$, $l_p$.  Nevertheless, in order for this sparsity to
propagate to higher dimensions, we would additionally wish that $\hat{B}$ is
also as sparse as possible.  This is not quite the case for $\{l_j\}$, as
$\hat{B}$ has two interior blocks with tri-diagonal structure, in the 
even-odd decomposition. Therefore, on a
typical row, $A$ will have the structure of the 5-point stencil for $d=2$ and
that of a 13-point stencil for $d=3$.

\subsection{The fast diagonalization method}
Linear systems involving structured matrices such as that defined in
\eqref{eq:kron} can be solved efficiently using the fast diagonalization method
(FDM) \cite{lynch64}.  This method reduces the computation into a sequence of
eigenvalue problems on the interval in a similar fashion as the method of
separation of variables.  It requires a separable PDE and a tensor product
basis; therefore it can only be applied on meshes or mesh patches with tensor
product structure.

To illustrate the FDM, we consider solving a problem on the interior of a
Cartesian cell, $A^K_{II} \uu^K_I = \ur^K_I$, where $A^K_{II} = R^K_I A
R^K_I{}^\top$ and $R^K_I \in \reals^{(p-1)^d \times n}$ is the Boolean
restriction matrix onto the interior DOFs of $K$. 
We may first solve the generalized eigenvalue problem on
the interior of the reference interval
\begin{equation} \label{eq:eig}
\hat{A}_{II} \hat{S}_{II} = \hat{B}_{II} \hat{S}_{II} \hat{\Lambda}_{II},
\end{equation}
in conjunction with the normalization condition $\hat{S}_{II}^\top \hat{B}_{II}
\hat{S}_{II} = \eye$.  Here $\hat{A}_{II}, \hat{B}_{II} \in \reals^{(p-1)
\times (p-1)}$ are the interior blocks of $\hat{A}$ and $\hat{B}$, respectively,
$\hat{\Lambda}_{II} \in \reals^{(p-1)\times (p-1)}$ is the diagonal matrix of
eigenvalues, and $\hat{S}_{II} \in \reals^{(p-1)\times (p-1)}$ is the matrix of
eigenvectors. The generalized eigenproblem \eqref{eq:eig} may be
equivalently rewritten as
\begin{equation} \label{eq:eigenproblem}
\hat{S}_{II}^\top \hat{A}_{II} \hat{S}_{II}^{} = \hat{\Lambda}_{II}, \quad 
\hat{S}_{II}^\top \hat{B}_{II} \hat{S}_{II}^{} = \eye.
\end{equation}
The corresponding continuous problem is to find $s_j(\hat{x})$ and $\lambda_j$, $j = 1, \dots, p-1,$
such that
\begin{equation}
   \int_{\hat{\mathcal{I}}} s'_i(\hat{x}) s'_j(\hat{x}) ~\md\hat{x} = \lambda_j \delta_{ij}, \quad 
   \int_{\hat{\mathcal{I}}} s_i(\hat{x}) s_j(\hat{x}) ~\md\hat{x} = \delta_{ij}, \quad s_j(\pm 1) = 0,
\end{equation}
with solution $s_j(\hat{x}) =\sin(j\pi (1+\hat{x})/2)$, $\lambda_j = j^2\pi^2/4$.
Thus, when $\hat{A}$ and $\hat{B}$ are discretized with the GLL basis, 
$\hat{S}_{II}$ approximates a discrete sine transform on the interior GLL nodes.

If $A^K$ is given by \eqref{eq:kron}, then its inverse has the following diagonal
factorization
\begin{equation}
\left(A^K_{II}\right)^{-1} = 
\left\{\bigotimes_{k=1}^d \hat{S}_{II}^{} \right\}
\left(\Lambda^K_{II}\right)^{-1}
\left\{\bigotimes_{k=1}^d \hat{S}_{II}^\top \right\},
\end{equation}
where
\begin{equation}
\Lambda^K_{II} = 
\begin{cases}
\mu^K_1 \eye \kron \hat{\Lambda}_{II} +
\mu^K_2 \hat{\Lambda}_{II} \kron \eye  & \mbox{if~} d=2, \\
\mu^K_1 \eye \kron \eye \kron \hat{\Lambda}_{II} +
\mu^K_2 \eye \kron \hat{\Lambda}_{II} \kron \eye +
\mu^K_3 \hat{\Lambda}_{II} \kron \eye \kron \eye  & \mbox{if~} d=3. \\
\end{cases}
\end{equation}
Therefore, the solution of a system $A^K_{II} \uu^K_I = \ur^K_I$ can be
obtained with $\bigo{p^{d+1}}$ computational work.

The main limitation of this approach is that it does not generalize to terms
that contain first derivatives, ruling out the possible extension to advection
problems.  Mixed first derivative terms are also very common in symmetric
coercive problems, for instance, when the cells have non-orthotropic
deformations, or for vector-valued operators that mix first derivatives of
distinct vector components, such as $\nabla (\nabla\cdot\bu$).

\subsection{Sparse FDM discretization}

Our key idea is to construct a new finite element basis on the interval,
inspired by the FDM, which yields a sparse stiffness matrix.  To construct this
new basis, we solve \eqref{eq:eigenproblem} with $\hat{A}$ and $\hat{B}$ in 
the GLL basis. Then, we interpolate the eigenvectors of the FDM with polynomials
$\{\hat{s}_j\}_{j=1}^{p-1} \subset \mathrm{P}_p(\hat{\mathcal{I}})$ that
satisfy $\hat{s}_{j}(\pm 1)=0$, $\hat{s}_j(\hat{\xi}_i) = [\hat{S}_{II}]_{ij}$
for $i,j\in [1,p-1]$.  The unisolvent dual basis of our proposed FDM
discretization of $\mathrm{P}_p(\hat{\mathcal{I}})$ consists of point
evaluation at the vertices and integral moments against the orthogonal interior
shape functions $\hat{s}_{j}$, $j=1,\ldots, p-1$.  This construction ensures
that $\hat{A}_{II}$ and $\hat{B}_{II}$ become diagonal under this basis.  As an
additional consequence, the 1D interface shape functions are also orthogonal to
the interior ones, but not to each other.  Hence $\hat{B}$ becomes as sparse as
possible in this new FDM basis. Figure \ref{fig:fdmshape} shows the shape
functions and the non-zero structure of $\hat{A}$ and $\hat{B}$ for the FDM
basis.

\begin{figure}[tbhp]
\centering
\quad\hfill
\subfloat[FDM, $\hat{s}_j(\hat{x})$]{
   \includegraphics[height=0.24\textwidth,valign=c]{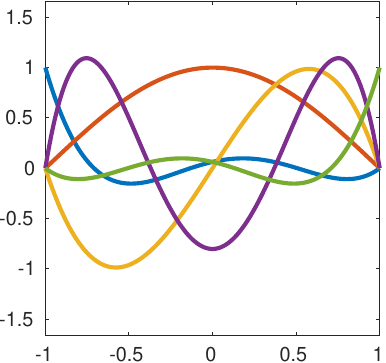}}
\hfill
\subfloat[FDM, $\hat{B}$]{
   \includegraphics[height=0.24\textwidth,valign=c]{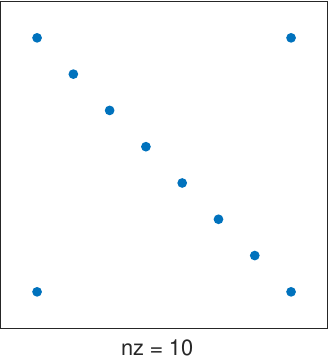}}
\hfill
\subfloat[FDM, $\hat{A}$]{
   \includegraphics[height=0.24\textwidth,valign=c]{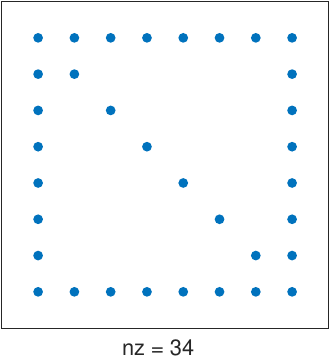}}
\hfill\quad

\caption{
Plots of the interval FDM shape functions ($p=4$) and non-zero structure of the
mass and stiffness matrices in the FDM basis on the reference interval ($p=7$).}
\label{fig:fdmshape}
\end{figure}

The stiffness matrix for a Cartesian cell $A^K$ in this FDM basis is also given
by \eqref{eq:kron}, and has structured sparsity, 
since $A\kron B$ is sparse when $A$ and $B$ are sparse. 
Consequently, the global matrix $A$ is also sparse.  We can thus apply sparse direct factorization methods or
other preconditioners such as the additive Schwarz method.

For problems that are separable in Cartesian coordinates, we obtain a solver that
is reminiscent of the FDM.  The multiplication times the Kronecker product of
the matrices of eigenvectors is incorporated in the definition of the FDM
shape functions.  If for some reason the problem needs to be solved with any
other basis, one can transform the linear system back and forth to the FDM
basis via interpolation and restriction at each application of the method.  We
also replace the diagonal matrix of the traditional FDM with a sparse matrix,
which can be assembled even on 3D meshes with a very large number of cells and
for very high degree.  This approach removes the requirement of a global tensor
product grid while exploiting the separability of the PDE and the local
structure of $\mathrm{Q}_p(\Omega)$, at the expense of replacing the diagonal
factor by a sparse matrix. 

\subsection{Hybrid $p$-multigrid/Schwarz method}

The solver of Pavarino is fully additive, across both the coarse grid and the
vertex-star patches. In our work we consider a small variation of this, with the
solver multiplicative between the two levels while remaining additive among the
vertex-star patches. This improves the convergence at essentially no cost. The
method can be interpreted as a hybrid multiplicative two-level
$\mathrm{V}(1,1)$-cycle with the additive Schwarz method~\cite{dryja87} with
vertex-star patches as the fine grid relaxation and the lowest-order
discretization on the same mesh as the coarse space. The sparse matrix for the coarse space may be
assembled and factorized, or other preconditioners such as geometric or algebraic multigrid
may be applied instead. The vertex-star patch $V_j$ includes the degrees of
freedom associated with vertex $\bv_j$ of $\mesh$ and all cells, facets, and
edges adjacent to $\bv_j$ (the topological entities in the \emph{star} of the
vertex, a standard concept in algebraic topology~\cite[\S 2]{munkres1984}).

We may write the multigrid relaxation as
\begin{equation}
P_\mathrm{ASM}^{-1} = \sum_{j=1}^J \bar{R}_j^\top A_j^{-1} \bar{R}_j^{},
\end{equation}
with $\bar{R}_j$ the Boolean restriction matrix onto $V_j$, and
$A_j = \bar{R}_j^{} A \bar{R}_j^\top$
are the sparse patch matrices for which we may explicitly compute a Cholesky
decomposition. The relaxation is scaled by the damping coefficient 
\begin{equation}
\omega = 2 \left[
(1+\alpha)\lambda_\mathrm{max} +
(1-\alpha)\lambda_\mathrm{min}
\right]^{-1}, 
\end{equation}
where $\lambda_\mathrm{\min}, \lambda_\mathrm{\max}$ are the extremal
eigenvalues of $P_\mathrm{ASM}^{-1}A$ estimated via the CG-Lanczos procedure
\cite{lanczos50}, and $\alpha = 0.25$ is chosen to tackle the high frequency error,
also ensuring that the error iteration matrix $\eye-\omega 
P_\mathrm{ASM}^{-1}A$ is contractive.

To illustrate the direct solver on the Cartesian vertex-star patch shown in
Figure \ref{fig:patches}a, we show in Figure \ref{fig:spy} the non-zero
structure for the patch matrix $A_j$ and its Cholesky factor. The sparsity
pattern of the global matrix $A$ connects the interior DOFs to their
projections onto the facets, hence a typical interior row of $A$ will have
$2d+1$ non-zeros. For the patch matrix $A_j$, an interior row will only have
$d+1$ non-zeros, as the patch only includes one facet per dimension on each
cell.  Moreover, the total number of non-zeros of $A_j$ is the same as that of
a low-order finite difference or finite element discretization with the 5-point
or 7-point stencil on the same grid.

\begin{figure}[tbhp]
\centering
\subfloat[$A_j$, $d=2$]{
   \includegraphics[height=0.24\textwidth,valign=c]{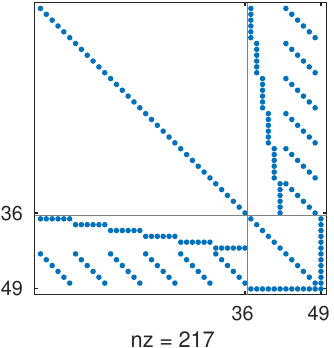}}
\hfill
\subfloat[$\mathrm{chol}(A_j)$, $d=2$]{
   \includegraphics[height=0.24\textwidth,valign=c]{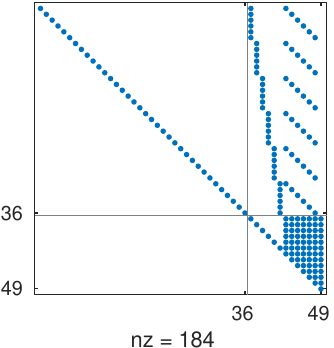}}
\hfill
\subfloat[$A_j$, $d=3$]{
   \includegraphics[height=0.24\textwidth,valign=c]{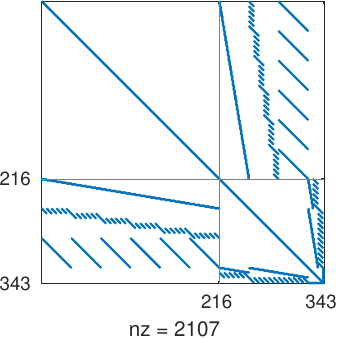}}
\hfill
\subfloat[$\mathrm{chol}(A_j)$, $d=3$]{
   \includegraphics[height=0.24\textwidth,valign=c]{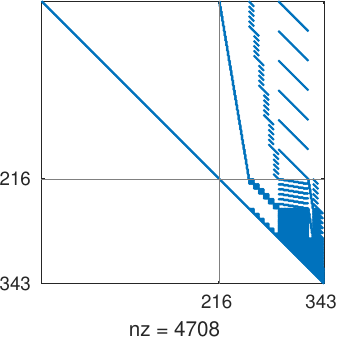}}

\caption{
Non-zero structure of the stiffness matrix in the FDM basis $ A_j = L_j
L_j^\top$ and its upper Cholesky factor $L_j^\top$ for the Poisson problem on a
Cartesian vertex-star patch with $p=4$. With the nested dissection reordering,
the factor matrix has minimal fill-in, occurring only on the bottom-right
block.}
\label{fig:spy}
\end{figure}

\subsection{Computational complexity\label{sec:complexity}}

Here we discuss the computational cost of
the solution of the patch problem using the Cholesky factorization. 
Once the factorization has been computed, it may be applied in $\bigo{p^{2(d-1)}}$ cost,
which is optimal for $d\in \{2,3\}$.  Unfortunately, the factorization phase is
suboptimal, requiring $\bigo{p^{3(d-1)}}$ operations to compute. The memory
required to store the Cholesky factor is $\bigo{p^{2(d-1)}}$, which for $d=3$
is one factor of $p$ higher than that required to store the solution.

Consider a stiffness matrix $A$ discretized with the FDM basis on any
mesh with all cells Cartesian.  The number of floating point operations (flops)
needed to solve a linear system using a sparse Cholesky factorization
$A=LL^\top$ is roughly four times the number of non-zero entries in $L$
\cite{davis08}.  To maximize the sparsity in $L$, it is crucial to reorder the
DOFs, such that interior DOFs are followed by the interface DOFs.  This ensures
that the fill-in is introduced only on the bottom-left block. To analyze the
cost of factorization, we first introduce the block $LDL^\top$ decomposition
\begin{equation}
\label{eq:LDLT}
A = 
\begin{bmatrix}
A_{II} & A_{I\Gamma}\\
A_{\Gamma I} & A_{\Gamma\Gamma}
\end{bmatrix}
=
\begin{bmatrix}
\eye & 0\\
A_{\Gamma I} A_{II}^{-1}& \eye
\end{bmatrix}
\begin{bmatrix}
A_{II} & 0\\
0 & S_{\Gamma}
\end{bmatrix}
\begin{bmatrix}
\eye &  A_{II}^{-1}A_{I\Gamma}\\
0 & \eye
\end{bmatrix},
\end{equation}
where $S_{\Gamma} = A_{\Gamma\Gamma} -
A_{\Gamma I} A_{II}^{-1} A_{I \Gamma}$ is
the interface Schur complement.
By construction, the top-left block $A_{II}$ is diagonal with
positive entries, with Cholesky factor $A_{II}^{1/2}$.
If we decompose $A_{II}$ and $S_{\Gamma}$ in the second
matrix on the RHS of \eqref{eq:LDLT} into
their Cholesky factors, and distribute each factor onto the other two matrices, we
obtain the Cholesky decomposition of $A$:
\begin{equation}
A = L L^\top = 
\begin{bmatrix}
A_{II}^{1/2} & 0\\
A_{\Gamma I} A_{II}^{-1/2}& L_\Gamma
\end{bmatrix}
\begin{bmatrix}
A_{II}^{1/2} &  A_{II}^{-1/2}A_{I\Gamma}\\
0 & L_\Gamma^\top
\end{bmatrix},
\end{equation}
where the Schur complement is factorized as $S_{\Gamma} =
L_\Gamma^{} L_\Gamma^\top$.  Since $A_{II}$ is diagonal, the off-diagonal block
$A_{\Gamma I} A_{II}^{-1/2}$ will preserve the non-zero structure of $A_{\Gamma
I}$, and similarly for its transpose. Thus, fill-in is only introduced
on the interface block through $L_\Gamma$.

An ordering strategy that minimizes fill-in consists of applying nested
dissection \cite{george73} on the adjacency graph that connects topological
entities. Each node in this graph represents a cell, face, edge or vertex.
The ordering of the entities is then used to permute the
corresponding blocks in $A$.

Assuming for the worst case that $L_\Gamma^\top$ is dense, the memory required
to store the Cholesky factor is $\bigo{p^{2(d-1)}}$. This represents a
significant increase from the traditional FDM, which is kept at the optimal
$\bigo{p^{d}}$. However, the DOF ordering does lead to some structured sparsity
in $L_\Gamma^\top$, as can be seen in Figure \ref{fig:spy}.
Nevertheless, we still observe dense $\bigo{p^{d-1}}\times\bigo{p^{d-1}}$ blocks.
The fact that $L_\Gamma^\top$ contains these dense blocks indicates
that $\bigo{p^{3(d-1)}}$ operations are required in the factorization phase.
However,
the forward and back-substitution steps have a computational
cost of $\bigo{p^{2(d-1)}}$ operations, which is optimal for $d\le 3$.

Compared to the FEM-SEM approach, our approach with the FDM basis has a sparser
Cholesky factorization.  In Figure \ref{fig:memory} we present the ratio of the
number of non-zeros in the Cholesky factors of our approach and the FEM-SEM
preconditioner ordered with nested dissection for $d \in \{2, 3\}$. The fact
that the ratio is always below 1 confirms that our approach is sparser, with a
substantial gain at higher degrees.  Practical FEM-SEM solvers use AMG
\cite{bello19} or patchwise multigrid with ILU smoothers \cite{pazner20} to
avoid the cost of the Cholesky factorizations of the patch matrices.

\begin{figure}
\centering
\resizebox{0.3\textwidth}{!}{
\begin{tikzpicture}
    \begin{semilogxaxis}[
         max space between ticks=30,
         ymin = 0, ymax = 1,
         ylabel=Relative non-zeros,
         xlabel=$p$,
         xtick = {3, 7, 15, 31},
         minor xtick = {4,5,6, 9,11,13, 19,23,27},
         log x ticks with fixed point,
         x tick label style={/pgf/number format/1000 sep=\,},
         legend cell align={left},
         legend style={at={(0.9,0.9)},anchor=north east},
         mark size=3pt, line width=0.75pt]
   \addplot[color=paired6,mark=*,mark options={fill=paired5}] table[x=p, y=2D] {figures/memory.txt};
   \addplot[color=paired2,mark=triangle*,mark options={fill=paired1}] table[x=p, y=3D] {figures/memory.txt};
   \legend{$d=2$\\ $d=3$\\}
   \end{semilogxaxis}
\end{tikzpicture}
}
\caption{
Relative number of non-zero entries in the Cholesky factors of the stiffness
matrix with the FDM approach and the FEM-SEM preconditioner, for a Cartesian vertex-star patch of $2^d$ cells. 
The FDM approach is sparser, with substantial gains at higher degrees.
}
\label{fig:memory}
\end{figure}

\subsection{Extension to non-Cartesian cells}

For arbitrarily deformed cells, the local stiffness matrices $A^K$ cannot be
expressed in terms of tensor products of $\hat{A}$ and $\hat{B}$ as in
\eqref{eq:kron}, and $A^K$ is not sparse in the FDM basis.  The
preconditioning techniques found in \cite{couzydeville95, fischer00, witte20}
introduce an auxiliary Cartesian domain to construct a separable problem for
which the FDM is a direct solver.  The method described by Fischer
\cite{fischer00} constructs a preconditioner by replacing $K$ with its nearest
rectangular approximation, whose dimensions are computed as the mean separation
between the mapped GLL nodes from opposite facets of $K$.  Witte et al.\
\cite{witte20} obtain the lengths from the average arclength of opposite sides
of $K$, but it is not clear how this extends to the 3D case. To the best of our
knowledge, no theory underpins these choices.

Our approach to construct the separable surrogate is based on the theory of
equivalent operator preconditioning \cite{axelsson09}. We work with the
bilinear form $a(\cdot, \cdot)$ in terms of the reference coordinates.  We
discard the mixed derivative terms that prevent separability, and we
replace the coefficients with piecewise constants in the reference
coordinates\footnote{Recall that piecewise constant coefficients in the
physical coordinates will not yield piecewise constant coefficients in  the
reference coordinates.}. We will prove that this choice yields a spectrally
equivalent operator.

The bilinear form $a(\cdot, \cdot)$ can be expressed as a sum of cell
contributions $a_K(\cdot,\cdot)$ where integration and differentiation are with
respect to $\hat{\bx}$.  The measure $\md\bx$ is replaced by
$\abs{\det(\Jac_K)} \md\hat{\bx}$ and the gradient is computed via the chain
rule, since the arguments of the form become functions of $\hat{\bx}$ after
being composed with $F_K$. Hence,
\begin{equation}
a(v, u) 
= \sum_{K\in\mesh} a_K(v, u) 
= \sum_{K\in\mesh} \int_{\hat{K}} 
\hat{\nabla} v\circ F_K \cdot \hat{G}^K  \hat{\nabla}
u\circ F_K
~\md\hat{\bx},
\end{equation}
where $\hat{\nabla}$ is the gradient with respect to $\hat{\bx}$,
and $\hat{G}^K : \hat{K} \to \reals^{d\times d}$ 
is the inverse metric of the coordinate transformation
weighted by the Jacobian determinant,
\begin{equation}
\hat{G}^K =
\abs{\det(\Jac_K)}
\Jac_K^{-1}
\Jac_K^{-\top}.
\end{equation}
This tensor encapsulates all of the geometry-dependent information in the form;
it is spatially dependent for generally-deformed elements, and constant in the
case of affine transformations.  For a separable geometry, $\hat{G}^K$ is
diagonal, and thus for a Cartesian cell it is diagonal and constant.  To
construct an auxiliary problem that is separable by the FDM in the reference
coordinates, we replace $\hat{G}^K$ in $a_K(\cdot, \cdot)$ with a constant
diagonal approximation $\diag(\mu^K_j)$.  Each $\mu^K_j$ is given by the
cell-wise average of the diagonal entry $\hat{G}^K_{jj}$,
\begin{equation}
\mu^K_j \coloneqq \frac{1}{|\hat{K}|}
\int_{\hat{K}} \hat{G}^K_{jj} ~\md\hat{\bx},
\end{equation}
where summation over the index $j$ is not implied.
As the approximation is local to each cell, it is still possible to assemble
a sparse stiffness matrix discretizing the auxiliary problem
on meshes where cells are not structured in a tensor product grid.

We now establish the spectral equivalence between the original bilinear form
and the auxiliary separable one.
\begin{theorem}
\label{thm:spectral}
Let $\hat{\mu}_K \coloneqq \diag(\mu^K_j)$ be the constant diagonal approximation
of $\hat{G}^K$, and define the auxiliary bilinear form
\begin{equation}
\tilde{a}(v, u) \coloneqq
 \sum_{K\in\mesh} \tilde{a}_K(v, u) 
\coloneqq \sum_{K\in\mesh} \int_{\hat{K}} 
\hat{\nabla} v\circ F_K \cdot \hat{\mu}_K  \hat{\nabla}
u\circ F_K
~\md\hat{\bx}.
\end{equation}
Then, there exist $p$-independent constants $c$, $C > 0$ that depend on $\mesh$
through $\hat{G}^K$ such that
\begin{equation}
c\leq \frac{a(v,v)}{\tilde{a}(v,v)} \leq C \quad \forall \, v\in V \setminus \{0\}.
\end{equation}
\end{theorem}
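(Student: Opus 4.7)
The plan is to reduce the global estimate to a cell-wise spectral equivalence and then take the worst constants over the finite mesh $\mesh$. Since $a(v,v)=\sum_{K\in\mesh} a_K(v,v)$ and $b(v,v)=\sum_{K\in\mesh} b_K(v,v)$ with both summands non-negative, it suffices to produce constants $c_K, C_K>0$ depending only on $\hat G^K$ such that $c_K\, b_K(v,v)\le a_K(v,v)\le C_K\, b_K(v,v)$ for every $v\in V$ and every $K\in\mesh$; summing then yields $(\min_K c_K)\, b(v,v)\le a(v,v)\le (\max_K C_K)\, b(v,v)$. Crucially, the argument never references the discrete space, so the constants are automatically $p$-independent.

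For fixed $K$, both $a_K$ and $b_K$ are integrals of a pointwise quadratic form applied to $\hat\nabla(v\circ F_K)$. Since $F_K$ is a $C^1$ diffeomorphism on the compact reference cell $\hat K$, the tensor $\hat G^K$ is symmetric positive definite and continuous, with strictly positive eigenvalues bounded above. Set
\[
\lambda_{\min}^K \coloneqq \min_{\hat\bx\in\hat K} \lambda_{\min}\bigl(\hat G^K(\hat\bx)\bigr), \qquad
\lambda_{\max}^K \coloneqq \max_{\hat\bx\in\hat K} \lambda_{\max}\bigl(\hat G^K(\hat\bx)\bigr).
\]
Pointwise one has $\lambda_{\min}^K |\xi|^2 \le \xi^\top \hat G^K(\hat\bx)\,\xi \le \lambda_{\max}^K |\xi|^2$ for all $\xi\in\reals^d$. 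The key observation for $b_K$ is that each diagonal entry of an SPD matrix lies between its extremal eigenvalues (as a Rayleigh quotient at $e_j$), and this property is preserved by averaging over $\hat K$; hence $\lambda_{\min}^K \le \mu^K_j \le \lambda_{\max}^K$ for every $j$, so $\hat\mu_K$ enjoys the same pointwise Rayleigh bounds as $\hat G^K$. Integrating over $\hat K$ therefore gives
\[
\lambda_{\min}^K\, \|\hat\nabla(v\circ F_K)\|_{L^2(\hat K)}^2 \ \le\ a_K(v,v),\ b_K(v,v)\ \le\ \lambda_{\max}^K\, \|\hat\nabla(v\circ F_K)\|_{L^2(\hat K)}^2,
\]
and pairing the upper bound on one form with the lower bound on the other yields $c_K = \lambda_{\min}^K/\lambda_{\max}^K$ and $C_K = \lambda_{\max}^K/\lambda_{\min}^K$, i.e.\ the reciprocal of and the pointwise condition number of $\hat G^K$.

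There is no substantive obstacle here: the whole argument reduces to the standard Rayleigh bound applied pointwise, and the estimate survives integration precisely because neither $v$ nor $p$ enters the constants. The only thing one must check is $\lambda_{\min}^K>0$, which follows from $\Jac_K$ being invertible with bounded inverse on $\hat K$. Note that the derived constants are not sharp --- on a Cartesian cell $\hat\mu_K=\hat G^K$ exactly and so $a_K=b_K$, yet the bound only predicts $c_K=1/\kappa(\hat G^K)$ --- but they suffice for the stated $p$-independent equivalence and correctly capture the qualitative deterioration of the constants as the cells become more anisotropic or distorted, consistent with the numerical behaviour reported later in the paper.
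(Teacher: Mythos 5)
Your proof is correct, and the global scaffolding (cell-wise equivalence, then $c=\min_K c_K$, $C=\max_K C_K$ over the finite mesh, with $p$-independence following because the argument never sees the discrete space) is identical to the paper's. Where you diverge is in the local estimate. The paper does not bound $a_K$ and $b_K$ separately against the unweighted Dirichlet energy; instead it writes $\hat{G}^K = \hat{\mu}_K^{1/2}\bigl(\hat{\mu}_K^{-1/2}\hat{G}^K\hat{\mu}_K^{-1/2}\bigr)\hat{\mu}_K^{1/2}$ and takes $c_K, C_K$ to be uniform spectral bounds for the \emph{diagonally scaled} metric $\hat{\mu}_K^{-1/2}\hat{G}^K\hat{\mu}_K^{-1/2}$, so that after substituting $w=\hat{\mu}_K^{1/2}\hat{\nabla}(v\circ F_K)$ one gets $c_K\,b_K \le a_K \le C_K\,b_K$ directly. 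This buys noticeably sharper constants: your route yields $C_K/c_K = \bigl(\lambda_{\max}^K/\lambda_{\min}^K\bigr)^2$, the squared condition number of $\hat{G}^K$, which depends on the cell aspect ratio as well as its skewness, whereas the paper's scaled-metric constants are invariant under anisotropic stretching --- for the parallelogram example the paper obtains $[1-\abs{\cos\theta},\,1+\abs{\cos\theta}]$ independently of $L_1, L_2$, while your $\lambda_{\min}^K, \lambda_{\max}^K$ degrade as $L_1/L_2$ departs from $1$. You correctly flag non-sharpness in the Cartesian case, but the aspect-ratio dependence is the more consequential loss, since robustness to anisotropy is precisely the advantage the paper later claims over the geometric rescaling approaches of Fischer and of Witte et al. Both arguments prove the theorem as stated; yours is slightly more elementary but establishes a weaker quantitative statement.
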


\begin{proof}
Let $c_K, C_K$ be lower and upper bounds for the spectrum of the diagonally scaled metric,
so that $\sigma(\hat{\mu}_K^{-1/2} \hat{G}^K \hat{\mu}_K^{-1/2}) \in [c_K, C_K]$ for
all $\hat{\bx}\in\hat{K}$. We claim that
\begin{equation}
c_K \leq \frac{a_K(v,v)}{\tilde{a}_K(v,v)} \leq C_K 
\quad \forall \, v\in \{v \in V: \left.v\right|_K \neq 0\}.
\end{equation}
This result is obtained by first rewriting $a_K(v,v)$ with
$\hat{\mu}_K^{1/2}
\hat{\mu}_K^{-1/2}\hat{G}^K\hat{\mu}_K^{-1/2}
\hat{\mu}_K^{1/2}$
instead of $\hat{G}^K$, and then replacing $\hat{\mu}^{-1/2}_K \hat{G}^K
\hat{\mu}^{-1/2}_K$ with $c_K \eye$ or $C_K \eye$ to find the lower or upper
bounds, respectively.
It then follows that
\begin{equation}
 c\coloneqq \min_{K\in\mesh} c_K
\leq \frac{a(v,v)}{\tilde{a}(v,v)} 
\leq \max_{K\in \mesh}C_K \eqqcolon C
\quad \forall \, v\in V \setminus \{0\}.
\end{equation}
\end{proof}

Let $\tilde{A}$ be the stiffness matrix associated with the auxiliary form
$\tilde{a}(\cdot, \cdot)$. By Theorem \ref{thm:spectral}, the condition number
$\kappa(\tilde{A}^{-1}A)$ is bounded by $C/c$ independently of $p$. Numerical
experiments also indicate that $\kappa(\tilde{A}^{-1}A)$ is independent of $h$
under uniform refinements.  Now consider a preconditioner $P$ where the
auxiliary form $\tilde{a}(\cdot, \cdot)$ is used additively in both the coarse
solve and the vertex-star patches. In this case, Theorem 1 of \cite{pavarino93}
guarantees that $\kappa(P^{-1}\tilde{A})$ is bounded from above independently
of $h$ and $p$. Hence we may conclude that $\kappa(P^{-1}A) \leq
\kappa(P^{-1}\tilde{A}) \kappa(\tilde{A}^{-1}A)$ is bounded independently of
$h$ and $p$. In practice, we expect that using multiplicative coarse grid
correction with the original form $a(\cdot, \cdot)$ can only improve the
preconditioner.

To gain useful insight, we consider the case where $d=2$ and $F_K$ is an 
affine transformation, that is when $K$ is a parallelogram. 
Without loss of generality, suppose that one of the sides of $K$ has length 
$2L_1$ and is aligned with the first reference coordinate axis, and the other 
side of length $2L_2$ is at an angle $\theta$ with respect to the same axis.
The Jacobian of the coordinate transformation is
\begin{equation}
\Jac_K =
\begin{bmatrix}
L_1 & L_2\cos{\theta}\\
0 & L_2\sin{\theta}
\end{bmatrix},
\end{equation}
to which corresponds the Jacobian-weighted inverse metric
\begin{equation}
\hat{G}^K =
\frac{1}{L_1 L_2\abs{\sin{\theta}}}
\begin{bmatrix}
L_2^2 & -L_1 L_2\cos{\theta}\\
-L_1 L_2\cos{\theta} & L_1^2
\end{bmatrix}.
\end{equation}
Since $\hat{G}^K$ is constant, $\hat{\mu}_K$ is simply the diagonal part of
$\hat{G}^K$. The spectrum of the diagonally scaled metric will be independent
of $L_1$ and $L_2$, but still depend on $\theta$,
\begin{equation} \label{eqn:spectralboundstheta}
\sigma\left(\hat{\mu}_K^{-1/2} \hat{G}^K \hat{\mu}_K^{-1/2}\right)
= \left[1-\abs{\cos{\theta}}, 1+\abs{\cos{\theta}}\right].
\end{equation}
This spectrum is desirable because it is centered at 1 and bounded above for
all $\theta$.  If we follow the geometric approaches of
\cite{fischer00,witte20}, we would have to choose a rectangle with side lengths
$2L_1$ and $2L_2$ as the auxiliary domain for the Poisson problem. Then, the
previous bounds \eqref{eqn:spectralboundstheta} would become scaled by
$\abs{\sin{\theta}}^{-1}$. In this case, the spectrum is unbounded from above
in the limit $\theta\to 0$.

\subsection{Numerical experiments}

We provide an implementation of the $\mathrm{P}_p$ element with the FDM shape
functions on the interval in the FIAT \cite{kirby04} package.  The extension to
quadrilaterals and hexahedra is achieved by taking tensor products of the one-dimensional
element with FInAT \cite{homolya17}.  Code for the sum-factorized
evaluation of the residual is automatically generated by Firedrake
\cite{Rathgeber2016, homolya18}, implementing
a Gau\ss--Lobatto quadrature rule with $3(p+1)/2$ points along each direction.
The sparse preconditioner discretizing the auxiliary form is implemented in
$\texttt{firedrake.FDMPC}$ using PETSc \cite{petsc-user-ref}. The Cholesky
factorization of the patch matrices is computed using CHOLMOD~\cite{davis08}.
Most of our computations were performed using an Intel Xeon CPU E5-4627 v2
$@$ 3.30GHz with 32 cores and 67.6 GB of RAM storage.

The hybrid $p$-multigrid/Schwarz solver employing the FDM/sparse relaxation is
illustrated in Figure \ref{fig:solver_pcg}.  
To achieve scalability with respect to the mesh parameter $h$, on the $p$-coarse
problem we employ geometric multigrid with damped point-Jacobi relaxation and a
Cholesky factorization on the coarsest level using MUMPS~\cite{mumps01}.
We test the effectiveness of this approach on a hierarchy of meshes obtained by
$l \ge 0$ uniform refinements of the base meshes shown in Figure \ref{fig:basemesh}.

\begin{figure}[tbhp]
\footnotesize
\centering
\begin{tikzpicture}[%
 every node/.style={draw=black, thick, anchor=west},
grow via three points={one child at (-0.0,-0.7) and
	two children at (0.0,-0.7) and (0.0,-1.4)},
edge from parent path={(\tikzparentnode.210) |- (\tikzchildnode.west)}]
\node {Krylov solver: PCG}
child {node {Hybrid $p$-multigrid/Schwarz V-cycle}
   child {node {Relaxation: FDM/sparse}
   }
   child {node {$p$-coarse: geometric multigrid}
      child {node {Relaxation: point-Jacobi}}
      child {node {$h$-coarse: Cholesky}}
   }
};
\end{tikzpicture}
\caption{Solver diagram for the Poisson problem.}
\label{fig:solver_pcg}
\end{figure}

We present results for the Poisson equation in $\Omega=(0,1)^d$ discretized on
the three hierarchies of Cartesian, unstructured, and structured but deformed
(Kershaw) \cite{kershaw81} meshes. 
The coordinate field of the Kershaw mesh is in $[\mathrm{Q}_3(\Omega)]^d \cap C^1(\Omega)$, with
a cell aspect ratio of $\varepsilon_y = \varepsilon_z = 0.3$
near the corners of the domain. 
We impose homogeneous Dirichlet BCs on $\Gamma_D = \partial\Omega$ and a
constant forcing $f = 1$. 
In Table \ref{tab:poisson-iter} we present PCG iteration counts required to
reduce the Euclidean norm of the residual by a factor of $10^8$ starting from a
zero initial guess.  In Table \ref{tab:poisson-cond} we show the condition
number $\kappa(P^{-1}A)$ estimated by CG-Lanczos. 
The results show almost complete $p$- and $h$-robustness in the Cartesian case,
and very slow growth of iteration counts in the unstructured case. 
Given the lack of shape regularity, the Kershaw mesh is significantly more
challenging; even with exact patch solvers, we do not expect $h$- or
$p$-robustness.

\begin{figure}[tbhp]
\centering
\subfloat[Cartesian]{
   \includegraphics[height=0.23\textwidth,valign=c]{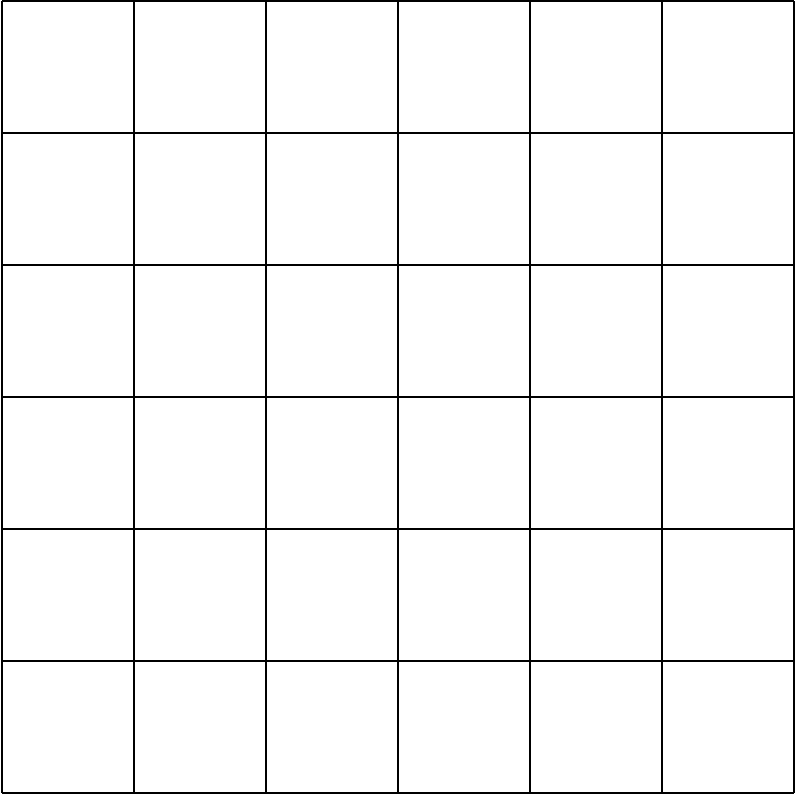}}
\hfill
\subfloat[Unstructured]{
   \includegraphics[height=0.23\textwidth,valign=c]{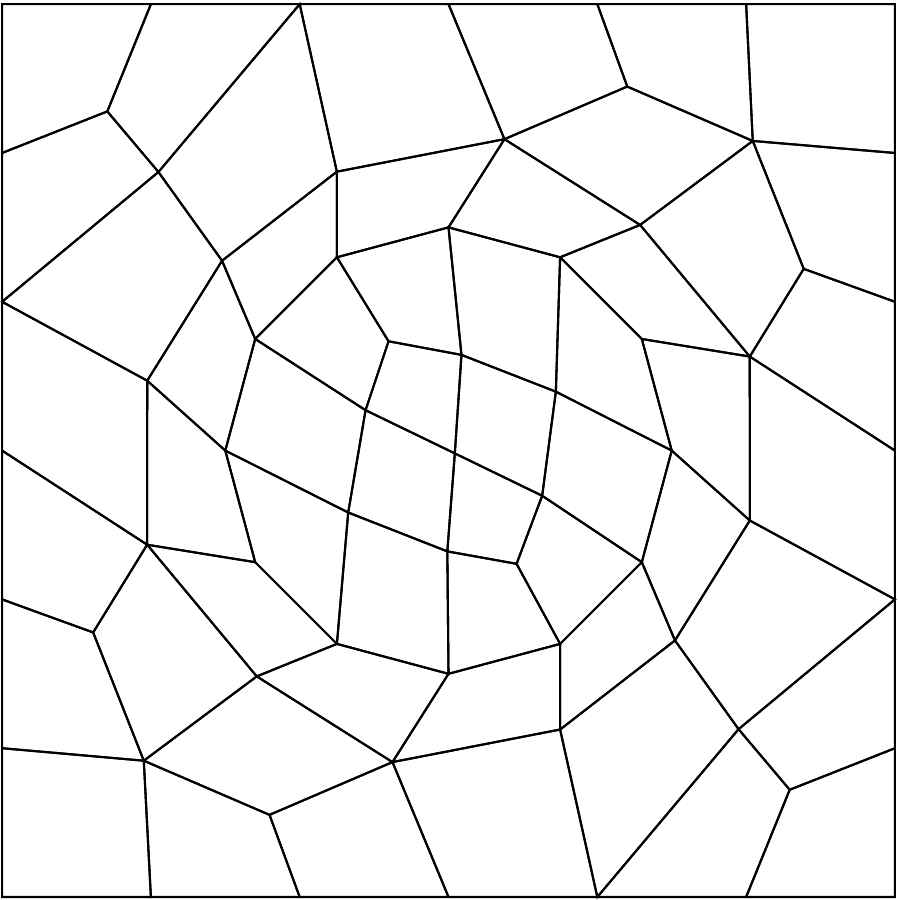}}
\hfill
\subfloat[Kershaw, $d=2$]{
   \includegraphics[height=0.23\textwidth,valign=c]{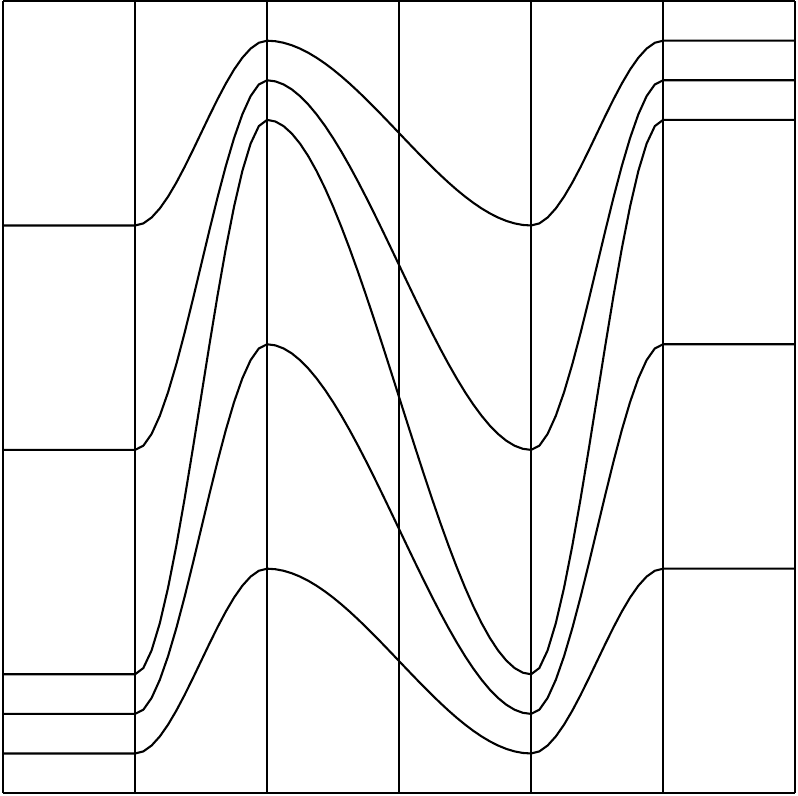}}
\hfill
\subfloat[Kershaw, $d=3$]{
   \includegraphics[height=0.23\textwidth,valign=c]{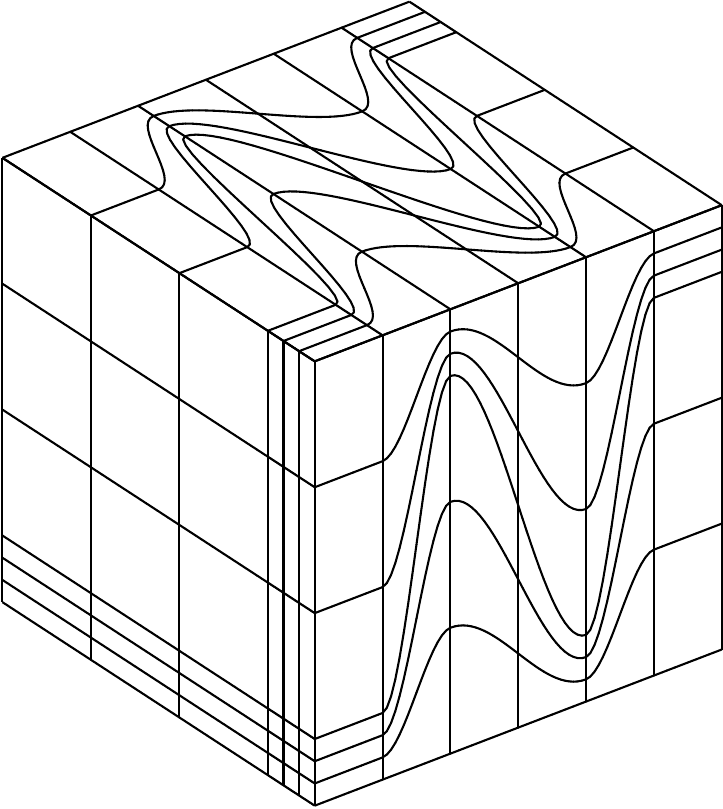}}

\caption{Base meshes for the Poisson problem. The Cartesian and unstructured base meshes used
for $d=3$ are the extrusion with six layers of the two-dimensional meshes shown here.}
\label{fig:basemesh}
\end{figure}

\begin{table}[tbhp]
{
\footnotesize
\caption{
PCG iteration counts for 
the hybrid $p$-multigrid/Schwarz solver with the FDM/sparse relaxation.
The patch problems are solved exactly on the Cartesian mesh.}
\label{tab:poisson-iter}
\begin{center}
\rowcolors{4}{}{gray!25}
\begin{tabular}{rr|*{3}{r}|*{3}{r}|*{3}{r}}
\toprule
& & \multicolumn{3}{c|}{Cartesian} 
  & \multicolumn{3}{c|}{Unstructured}
  & \multicolumn{3}{c}{Kershaw}
\\
$d$ & $p\setminus l$ 
& \multicolumn{1}{c}{0} & \multicolumn{1}{c}{1} & \multicolumn{1}{c|}{2} 
& \multicolumn{1}{c}{0} & \multicolumn{1}{c}{1} & \multicolumn{1}{c|}{2} 
& \multicolumn{1}{c}{0} & \multicolumn{1}{c}{1} & \multicolumn{1}{c}{2} 
\\
\midrule
2  & 3 & 7 & 8 & 9 & 12 & 13 & 14 & 27 & 35 & 54\\
   & 7 & 8 & 8 & 9 & 16 & 16 & 17 & 44 & 56 & 78\\
   &15 & 8 & 9 & 9 & 19 & 19 & 19 & 58 & 69 & 90\\
   &31 & 8 & 9 & 9 & 21 & 20 & 21 & 67 & 80 & 97\\
\midrule
3  & 3 & 12 & 12 & 12 & 17 & 17 & 18 & 54  & 66  & 102\\
   & 7 & 12 & 12 & 12 & 22 & 21 & 21 & 98  & 106 & 158\\
   &15 & 12 & 13 &    & 25 & 24 &    & 131 & 132 &    \\
\bottomrule
\end{tabular}
\end{center}
}
\end{table}

\begin{table}[tbhp]
{
\footnotesize
\caption{
Estimated condition numbers for the preconditioned operator $\kappa(P^{-1}A)$
using the hybrid $p$-multigrid/Schwarz solver with the FDM/sparse relaxation.
}
\label{tab:poisson-cond}
\begin{center}
\rowcolors{4}{}{gray!25}
\begin{tabular}{rr|*{3}{r}|*{3}{r}|*{3}{r}}
\toprule
& & \multicolumn{3}{c|}{Cartesian} 
  & \multicolumn{3}{c|}{Unstructured}
  & \multicolumn{3}{c}{Kershaw}
\\
$d$ & $p\setminus l$ 
& \multicolumn{1}{c}{0} & \multicolumn{1}{c}{1} & \multicolumn{1}{c|}{2} 
& \multicolumn{1}{c}{0} & \multicolumn{1}{c}{1} & \multicolumn{1}{c|}{2} 
& \multicolumn{1}{c}{0} & \multicolumn{1}{c}{1} & \multicolumn{1}{c}{2} 
\\
\midrule
2  & 3 & 1.44 & 1.49 & 1.50 & 2.14 & 2.37 & 2.81 & 9.34 & 15.6 & 34.5\\
   & 7 & 1.48 & 1.48 & 1.50 & 3.23 & 3.27 & 3.79 & 19.6 & 30.3 & 57.6\\
   &15 & 1.51 & 1.51 & 1.52 & 4.06 & 3.78 & 4.13 & 30.5 & 45.8 & 69.0\\
   &31 & 1.54 & 1.52 & 1.52 & 4.45 & 4.06 & 4.36 & 40.4 & 57.1 & 73.3\\
\midrule
3  & 3 & 2.87 & 2.49 & 2.45 & 4.16 & 4.21 & 4.55 & 34.8 & 46.1 & 117\\
   & 7 & 2.79 & 2.70 & 2.67 & 5.88 & 5.54 & 5.47 & 100 & 110   & 266\\
   &15 & 2.83 & 2.79 &      & 7.12 & 6.44 &      & 165 & 151   &    \\
\bottomrule
\end{tabular}
\end{center}
}
\end{table}

To assess the computational performance of our approach, we solve the
three-dimensional Poisson equation on a Cartesian mesh with $3\times 3\times 3$
cells with a single core of an Intel Core i7-10875H CPU $@$ 2.30GHz.
We plot in Figure \ref{fig:runtime} the runtimes, flop counts, and achieved
arithmetic performance for the Cholesky factorization of the patch matrices,
the solution of the patch problems using this factorization 
(per application of the relaxation), and the matrix-free
evaluation of the residual (per Krylov iteration, excluding the application of the global to local map) as functions of $p$. 
The dotted lines are to indicate powers of $2p-1$, which is the number of DOFs
along each side of typical vertex-star patch not intersecting the mesh boundary.

Despite the $\bigo{p^{3(d-1)}}$ computational cost of the Cholesky
factorization, these results show $\bigo{p^{2(d-1)}}$ scaling for runtime up to
$p=15$.  This speedup can be explained mainly by data locality.  The sparse
Cholesky factorization is obtained by recursively applying the block
$LDL^\top$ decomposition up to the point where the Schur complement is
sufficiently dense.  The computation of this Schur complement via dense
matrix-matrix multiplication (BLAS-3 $\texttt{dgemm}$) dominates the
computational cost.  As $p$ is increased, the utilization of arithmetic units
increases in proportion to the dimension of the Schur complement, which
explains the $\bigo{p^{d-1}}$ scaling of the achieved arithmetic performance.
As the arithmetic capabilities become saturated for $p>15$, the
$\bigo{p^{3(d-1)}}$ scaling in the factorization runtime should become
apparent.

Most of the time in the relaxation step is spent in accessing the factor matrix
from memory, given the $\bigo{p^{2(d-1)}}$ sub-optimal storage per patch.  The
relaxation is therefore limited by memory bandwidth and not arithmetically
intense, which explains the poor arithmetic performance.
This is in contrast to the sum-factorized residual evaluation, 
which has a $\bigo{p^d}$ memory footprint and
presents better arithmetic utilization \cite{kronbichler18}.
Nevertheless, the results indicate that the runtime for the
solution of the patch problems with the sparse Cholesky factorization remains
very close to that of the matrix-free residual evaluation for moderate $p$,
being slightly faster for $p\leq 7$, mainly due to lower operation count.

\begin{figure}[tbhp]
\centering

\subfloat[Runtime]{\resizebox{0.33\textwidth}{!}{
\begin{tikzpicture}
    \begin{loglogaxis}[
         max space between ticks=20,
         ylabel=Runtime (s),
         xlabel=$p$,
         xtick = {3, 7, 15, 31},
         minor xtick = {4,5,6, 9,11,13, 19,23,27},
         log x ticks with fixed point,
         x tick label style={/pgf/number format/1000 sep=\,},
         legend cell align={left},
         legend style={at={(0.05,0.9)},anchor=north west},
         mark size=3pt, line width=0.75pt]
   \addplot[color=paired8,mark=*,mark options={fill=paired7}] table[x=p, y expr=\thisrow{factorsym_time}] {figures/flops3.txt};
   \addplot[color=paired6,mark=*,mark options={fill=paired5}] table[x=p, y expr=\thisrow{factornum_time}] {figures/flops3.txt};
   \addplot[color=paired4,mark=square*,mark options={fill=paired3}] table[x=p, y expr=\thisrow{solve_time}/\thisrow{iter}] {figures/flops3.txt};
   \addplot[color=paired2,mark=triangle*,mark options={fill=paired1}] table[x=p, y expr=\thisrow{resid_time}/\thisrow{iter}] {figures/flops3.txt};
   \addplot[color=black,mark=none,dashed] table[x=p,y expr=4E-4*((2*\thisrow{p}-1)/5)^4] {figures/flops3.txt}
      coordinate [pos=0.75] (A) 
      coordinate [pos=0.90] (B)
   ;
   \draw (A) -| (B)  
      node [pos=0.75, anchor=west] {4}
      node [pos=0.25, anchor=north] {1} 
   ;
   \addplot[color=black,mark=none,dashed] table[x=p,y expr=4E-4*((2*\thisrow{p}-1)/5)^6] {figures/flops3.txt}
      coordinate [pos=0.75] (C) 
      coordinate [pos=0.90] (D)
   ;
   \draw (C) |- (D)  
      node [pos=0.25, anchor=east] {6} 
      node [pos=0.75, anchor=south] {1}
   ;
   \legend{factor (sym)\\ factor (num)\\ patch solve\\ residual\\}
   \end{loglogaxis}
\end{tikzpicture}
}}
\hfill
\subfloat[Flop count]{\resizebox{0.33\textwidth}{!}{
\begin{tikzpicture}
    \begin{loglogaxis}[
         max space between ticks=20,
         ylabel=Gflop,
         xlabel=$p$,
         xtick = {3, 7, 15, 31},
         minor xtick = {4,5,6, 9,11,13, 19,23,27},
         log x ticks with fixed point,
         x tick label style={/pgf/number format/1000 sep=\,},
         legend cell align={left},
         legend style={at={(0.1,0.9)},anchor=north west},
         mark size=3pt, line width=0.75pt]
   \addplot[color=paired6,mark=*,mark options={fill=paired5}] table[x=p, y expr=1E-9*\thisrow{factornum_flops}] {figures/flops3.txt};
   \addplot[color=paired4,mark=square*,mark options={fill=paired3}] table[x=p, y expr=1E-9*\thisrow{solve_flops}/\thisrow{iter}] {figures/flops3.txt};
   \addplot[color=paired2,mark=triangle*,mark options={fill=paired1}] table[x=p, y expr=1E-9*\thisrow{resid_flops}/\thisrow{iter}] {figures/flops3.txt};
   \addplot[color=black,mark=none,dashed] table[x=p,y expr=1E-3*((2*\thisrow{p}-1)/5)^4] {figures/flops3.txt}
      coordinate [pos=0.80] (A) 
      coordinate [pos=0.95] (B)
   ;
   \draw (A) |- (B)  
      node [pos=0.25, anchor=east] {4} 
      node [pos=0.75, anchor=south] {1}
   ;
   \addplot[color=black,mark=none,dashed] table[x=p,y expr=1E-3*((2*\thisrow{p}-1)/5)^6] {figures/flops3.txt}
      coordinate [pos=0.80] (C) 
      coordinate [pos=0.95] (D)
   ;
   \draw (C) |- (D)  
      node [pos=0.25, anchor=east] {6} 
      node [pos=0.75, anchor=south] {1}
   ;
   \end{loglogaxis}
\end{tikzpicture}
}}
\hfill
\subfloat[Performance]{\resizebox{0.33\textwidth}{!}{
\begin{tikzpicture}
    \begin{loglogaxis}[
         ylabel=Gflop/s,
         xlabel=$p$,
         xtick = {3, 7, 15, 31},
         minor xtick = {4,5,6, 9,11,13, 19,23,27},
         log x ticks with fixed point,
         x tick label style={/pgf/number format/1000 sep=\,},
         legend cell align={left},
         legend style={at={(0.1,0.9)},anchor=north west},
         mark size=3pt, line width=0.75pt]
   \addplot[color=paired6,mark=*,mark options={fill=paired5}] table[x=p, y expr=1E-9*\thisrow{factornum_flops}/\thisrow{factornum_time}] {figures/flops3.txt};
   \addplot[color=paired4,mark=square*,mark options={fill=paired3}] table[x=p, y expr=1E-9*\thisrow{solve_flops}/\thisrow{solve_time}] {figures/flops3.txt};
   \addplot[color=paired2,mark=triangle*,mark options={fill=paired1}] table[x=p, y expr=1E-9*\thisrow{resid_flops}/\thisrow{resid_time}] {figures/flops3.txt};
   \addplot[color=black,mark=none,dashed] table[x=p,y expr=4.25E-1*((2*\thisrow{p}-1)/5)^2] {figures/flops3.txt}
      coordinate [pos=0.35] (A) 
      coordinate [pos=0.50] (B)
   ;
   \draw (A) -| (B)  
      node [pos=0.75, anchor=west] {2}
      node [pos=0.25, anchor=north] {1} 
   ;
   \end{loglogaxis}
\end{tikzpicture}
}}

\caption{
Runtimes, flop counts, and achieved arithmetic performance for the 
Cholesky factorization (symbolic and numeric), solution of
the patch problems, and residual evaluation
for a Cartesian mesh with $3\times 3 \times 3$ cells on a single CPU core. 
We observe that the factorization runtime scales better
than expected, close to the optimal $\bigo{p^{d+1}}$ complexity.
}
\label{fig:runtime}
\end{figure}

\section{Application to linear elasticity problems\label{sec:elasticity}}

\subsection{Primal formulation of linear elasticity\label{sec:primal}}

We now consider how these ideas may be applied in the more complex setting of a
nonseparable, vector-valued PDE.  The equations of linear elasticity describe
the displacement $\bu: \Omega \to \reals^d$ of a solid body with a reference
configuration $\Omega \subset \reals^d$. The primal formulation is to find
$\bu-\bu_0 \in V \coloneqq [H^1_0(\Omega)]^d$ such that
\begin{equation}
a(\bv, \bu) = L(\bv) \quad \forall \, \bv \in V,
\end{equation} 
where
\begin{equation}
a(\bv, \bu) = \int_{\Omega}
2\mu \varepsilon(\bv) : \varepsilon(\bu) + \lambda \nabla\cdot\bv \nabla\cdot\bu ~\md\bx, \quad
L(\bv) = \int_{\Omega} \bv\cdot \bB ~\md\bx. 
\end{equation}
Here, we assume that the material is homogeneous and isotropic, and can
thus be described by the Lam\'e parameters $\mu, \lambda>0$; $\varepsilon(\bu)
= (\nabla \bu + \nabla \bu ^\top)/2$ is the linearized strain tensor; $\bu_0$
is Dirichlet data prescribed on $\Gamma_D\subseteq \partial\Omega$; and $\bB \in
[L^2(\Omega)]^d$ is a body force. The Poisson ratio $\nu = \lambda/(2\mu +
2\lambda)$ measures the compressibility of the material.  In the incompressible
limit $\lambda \to \infty$ (i.e.~$\nu \to 1/2$), the problem becomes
ill-conditioned, as $a(\cdot, \cdot)$ becomes insensitive to divergence-free
perturbations in the arguments.

Consider the partitioning of the stiffness
matrix $A$ into blocks that act on each displacement component,
\begin{equation}
A = \begin{bmatrix}
A_{11} & \cdots & A_{1d} \\
\vdots & \ddots & \vdots \\
A_{d1} & \cdots & A_{dd}
\end{bmatrix}.
\end{equation}
The diagonal block $A_{jj}$ discretizes the bilinear form
\begin{equation}
\int_\Omega \mu \nabla v_j \cdot \nabla u_j + (\mu + \lambda) \frac{\partial v_j}{\partial x_j} \frac{\partial u_j}{\partial x_j} ~\md\bx,
\end{equation}
where summation is not implied, 
and $u_j$ and $v_j$ are components of $\bu$ and $\bv$, respectively.
The off-diagonal blocks $A_{ij}$, $i\neq j$, discretize
\begin{equation}
\int_\Omega \mu \frac{\partial v_i}{\partial x_j} \frac{\partial u_j}{\partial x_i} 
      + \lambda \frac{\partial v_i}{\partial x_i} \frac{\partial u_j}{\partial x_j} ~\md\bx.
\end{equation}
The diagonal blocks can be diagonalized by the FDM on the interior of a
Cartesian cell when the reference axes are aligned with the physical coordinates.
The same statement does not hold true for the off-diagonal blocks, as they
couple together different displacement components. This is because they
discretize products of different first derivatives on the different components
and hence are not separable.

The separate displacement components (SDC) preconditioner \cite{blaheta94,
gustafsson98} is defined as the block diagonal matrix $A_\mathrm{SDC} =
\diag(A_{11}, \ldots, A_{dd})$. In other words, this approach is also described
as block-Jacobi in the displacement components.  
The SDC preconditioner discretized with the FDM basis is sparse
for Cartesian cells aligned with the coordinate axes.
On arbitrary cells, for each separate component, we obtain
an auxiliary form that is separable in the reference coordinates by selecting
constant diagonal coefficients $\hat{\mu}_K$. 

It is shown in \cite{blaheta94} that for a homogeneous isotropic material with
principal axes parallel to the axes of the reference coordinate system, the
condition number of the preconditioned matrix will depend on the Poisson ratio:
\begin{equation} \label{eq:SDC}
\kappa(A_\mathrm{SDC}^{-1}A) \leq \frac{d-1}{\gamma} \frac{1-\nu}{1-2\nu},
\end{equation}
where $\gamma$ is the constant appearing in Korn's inequality,
\begin{equation}
\norm{\bu}_{H^1(\Omega)^d}^2 
\leq \gamma \int_{\Omega} \bu\cdot\bu + \varepsilon(\bu):\varepsilon(\bu) ~\md\bx \quad \forall \, \bu \in V.
\end{equation}
Thus, the convergence rate of the SDC preconditioner will deteriorate for $\nu$
approaching $1/2$, the so-called nearly incompressible case.

We consider the reference configuration $\Omega = (0,1)^d$ discretized on a
Cartesian mesh with 8 cells along each direction. We specify $\mu=1$, a uniform
downwards body force $\bB = -0.02\be_2$, and homogeneous Dirichlet BCs on
$\Gamma_D = \{\bx\in\partial\Omega, x_1=0\}$.  In Table
\ref{tab:primal_elasticity} we present the PCG iteration counts required to
reduce the Euclidean norm of the residual by a factor of $10^8$ starting from a
zero initial guess. As the preconditioner, we employ the hybrid
$p$-multigrid/Schwarz method with vertex-star patches and the SDC/FDM/sparse
relaxation and a coarse space with $p=1$. As expected from \eqref{eq:SDC}, 
the results confirm that the approach is reasonably $p$-robust, but
that robustness with respect to $\nu$ cannot be achieved with SDC relaxation on
vertex-star patches.

\begin{table}[tbhp]
{
\footnotesize
\caption{
PCG iteration counts for the primal formulation of the linear elasticity problem
using the SDC/FDM/sparse relaxation.}
\label{tab:primal_elasticity}
\begin{center}
\rowcolors{2}{gray!25}{white}
\begin{tabular}{rr|*{5}{r}}
\toprule
$d$ & $p\setminus\lambda$ 
& $0$ & $10^0$ & $10^1$ & $10^2$ & $10^3$\\
\midrule
2 & 3 & 13 & 14 & 24 & 70 & 199\\ 
  & 7 & 17 & 17 & 28 & 76 & 236\\ 
  &15 & 18 & 19 & 30 & 81 & 249\\ 
  &31 & 20 & 20 & 32 & 84 & 258\\ 
\midrule
3 & 3& 20 & 22 & 39 & 114 & 362\\  
  & 7& 25 & 28 & 48 & 123 & 381\\  
  &15& 27 & 29 & 51 & 125 & 373\\  
\bottomrule
\end{tabular}
\end{center}
}
\end{table}

\subsection{Mixed FEM formulations of linear elasticity\label{sec:mixed}}

In order to avoid locking in nearly incompressible continua, or impose the
incompressibility constraint, the standard approach is to introduce a
pressure-like variable and discretize with a mixed FEM.  This is expressed by
the weak formulation: find $(\bu-\bu_0, p)\in V\times Q$ such that
\begin{alignat}{2}
a(\bv, \bu) + b(p, \bv) &= L(\bv) && \quad \forall \, \bv \in V,\\
b(q, \bu) - c(q, p) &= 0 && \quad \forall \, q \in Q, \label{eq:divfree}
\end{alignat}
where
\begin{equation}
a(\bv, \bu) = \int_{\Omega} 2\mu \varepsilon(\bv) : \varepsilon(\bu) ~\md\bx, \quad
b(q, \bu) = \int_{\Omega} q ~\div(\bu) ~\md\bx, \quad
c(q, p) = \int_{\Omega}\lambda^{-1} q p ~\md\bx,
\end{equation}
and $Q = L^2_0(\Omega)$ for $\lambda = \infty$ and $\Gamma_D = \partial \Omega$,
or $Q = L^2(\Omega)$ otherwise.

In order for this problem to have a unique solution, we require the well-known
Brezzi conditions: the solution for $\bu$ is unique if $a(\cdot, \cdot)$ is
coercive on the kernel of $b(\cdot,\cdot)$, and the solution for $p$ is unique
if there exists a right inverse for $b(\cdot, \cdot)$.  This is expressed in
the so-called inf--sup condition or LBB condition \cite{babuska73, brezzi74}:
there exists $\beta$, which might depend on $\Omega$, such that
\begin{equation}
\label{eq:infsup}
0<\beta \coloneqq
\adjustlimits \inf_{q\in Q} \sup_{\bv\in V} 
\frac{b(q, \bv)}{a(\bv,\bv)^{1/2} \norm{q}_Q}.
\end{equation}

After selecting suitable finite dimensional subspaces $V_h \subset V$, $Q_h
\subset Q$, we obtain a system of linear equations with the saddle point
structure
\begin{equation}
\label{eq:saddle}
\begin{bmatrix}
A & B^\top\\
B & -C
\end{bmatrix}
\begin{bmatrix}
\ubu\\ \up
\end{bmatrix}
=
\begin{bmatrix}
\ubf\\ \ug
\end{bmatrix}.
\end{equation}

We require the analogous Brezzi conditions for the discrete problem:
that $a(\cdot, \cdot)$ is coercive on
the discrete kernel of $b(\cdot, \cdot)$, and that there exists a discrete
inf-sup constant $\betah$ independent of the mesh but possibly depending on $p$
such that
\begin{equation}
\label{eq:betah}
0<\betah \coloneqq
\adjustlimits \inf_{q_h\in Q_h} \sup_{\bv_h\in V_h} 
\frac{b(q_h, \bv_h)}{a(\bv_h, \bv_h)^{1/2} \norm{q_h}_{Q_h}}.
\end{equation}
The discretization $V_h \times Q_h$ must be chosen carefully to satisfy these
conditions; the discrete inf-sup condition will not be satisfied by arbitrary
discretizations.  If they are, we have the well-known error estimates
\begin{subequations}
\begin{align}
\norm{\bu_h - \bu}_{V} \leq C_1 \left\{ \inf_{\bv_h\in V_h} \norm{\bv_h - \bu}_V  + \inf_{q_h\in Q_h}\norm{q_h - p}_Q\right\},\\
\norm{p_h - p}_{Q} \leq \betah^{-1} C_2 \left\{ \inf_{\bv_h\in V_h} \norm{\bv_h - \bu}_V  + \inf_{q_h\in Q_h}\norm{q_h - p}_Q\right\},
\end{align}
\end{subequations}
where $C_1, C_2>0$ are generic constants independent of the mesh parameter $h$.
For the use of high-order discretizations, it is desirable to choose an element
pair where $\betah$ does not decrease as the polynomial degree of the
approximation is increased. Such a discretization is referred to as $p$-stable.

In fact, $p$-stability is important for solvers also.  Approaches based on
block-Gau{\ss}ian elimination, such as the Uzawa algorithm~\cite{uzawa58} and
block-preconditioned MINRES \cite{paige75}, require preconditioners for the
negative pressure Schur complement $S = C + B A^{-1} B^\top$. It is well known
that for the Stokes system, the continuous analogue of $S$, $\nabla \cdot
(-\nabla^2)^{-1} \nabla$, is well approximated by the identity operator
\cite{silvester94}.  It follows that $S$ is spectrally equivalent to the
pressure mass matrix, $M_p$, 
\begin{equation} \beta_0^2 \leq \frac{\uq^\top
S \uq}{\uq^\top M_p\uq} \leq \beta_1^2 \quad \forall \, \uq \in
\reals^{\dim(Q_h)}\setminus \{0\}.  
\end{equation} 
The rate of convergence of block-preconditioned MINRES will be determined by
the ratio $\beta_1/\beta_0$.  For Stokes flows with pure Dirichlet BCs,
$\beta_1=1$, and $\beta_1 = \sqrt{d}$ otherwise. In general, we have $\beta_0 =
\betah$.  Since $A$ is spectrally equivalent to the vector
Laplacian, these results also hold for linear elasticity. We may expect solvers
based on such techniques to degrade with $p$-refinement if the discretization
is not $p$-stable.

If we choose to work with the $[H^1(\Omega)]^d$-conforming space $V_h =
[\mathrm{Q}_p(\Omega)]^d$, some standard inf--sup stable choices for $Q_h$ are
$\mathrm{Q}_{p-1}(\Omega)$, $\mathrm{DQ}_{p-2}(\Omega)$ and
$\mathrm{DP}_{p-1}(\Omega)$. $\mathrm{DQ}_{p-2}$ denotes discontinuous
piecewise polynomials of degree at most $p-2$ in each direction, while
$\mathrm{DP}_{p-1}$ denotes discontinuous piecewise polynomials of
total degree at most $p-1$. The choice $Q_h = \mathrm{Q}_{p-1}$ gives
rise to the high-order generalization of the Taylor--Hood mixed element
\cite{taylor73}.  Here, $M_p$ will not be block diagonal, and hence more
expensive preconditioning techniques will be required. Moreover, it is shown
numerically in \cite{ainsworth00} that the Taylor--Hood element is not
$p$-stable. The choice $Q_h = \mathrm{DQ}_{p-2}$ exhibits an asymptotic decay
of $\betah \leq C p^{(1-d)/2}$ as $p\to \infty$~\cite{bernardi87}, and thus is
not $p$-stable.  In practice, it is observed that this is quite a pessimistic
bound for moderate $p$ \cite{maday93}.  The choice $Q_h = \mathrm{DP}_{p-1}$ is
$p$-stable, but numerical experiments reveal that the stability is severely
affected by the cell aspect ratio, unlike the previous two choices
\cite{schotzau98}. Moreover this last space does not have tensor product shape
functions, so its efficient implementation becomes challenging.

To construct $p$-stable discretizations that are also robust to cell aspect
ratio, we turn to nonconforming schemes with $V_h \subset H(\div, \Omega)$
\cite{cockburn07, lederer18}.  In particular we consider the use of Raviart--Thomas
elements~\cite{arnold05} of degree $p$ for $V_h$ for the displacement, paired
with $Q_h = \mathrm{DQ}_{p-1}$. This pair satisfies $\div(V_h) = Q_h$, which
enforces the incompressibility constraint \eqref{eq:divfree} exactly in the
numerical approximation for $\lambda = \infty$.  The Raviart--Thomas elements
are defined on the reference quadrilateral as
\begin{equation}
\mathrm{RT}_{p}(\hat{K}) = 
\mathrm{P}_{p}(\hat{\mathcal{I}}) \otimes \mathrm{DP}_{p-1}(\hat{\mathcal{I}}) \oplus 
\mathrm{DP}_{p-1}(\hat{\mathcal{I}}) \otimes \mathrm{P}_{p}(\hat{\mathcal{I}}).
\end{equation}
The analogous element in three dimensions is referred to as the N\'ed\'elec
face element \cite{nedelec80}.  The definition can be extended to curvilinear
cells via the contravariant Piola transform: for a function $\hat{\bu} :
\hat{K} \to \reals^d$, we define $\bu : K \to \reals^d$ as
\begin{equation}
\bu = \mathcal{P}_K(\hat{\bu}) \coloneqq \frac{1}{\abs{\Jac_K}} \Jac_K \left(\hat{\bu} \circ F_K^{-1}\right),
\end{equation}
and set
\begin{equation}
\mathrm{RT}_{p}(K) = \mathcal{P}_K \left(\mathrm{RT}_{p}(\hat{K})\right).
\end{equation}

These elements have superb properties, but their nonconforming nature must be
suitably addressed in the discretization. They
only impose continuity of the normal components of $\bu$ across
cell facets, and we therefore weakly enforce the tangential continuity via the symmetric
interior penalty (SIPG) method~\cite{arnold82}.
The use of SIPG for the displacement requires further extension
of the FDM/sparse relaxation; in particular, we must consider the additional
facet integrals arising in the method, and show that the stiffness remains sparse.

\subsection{Extension to interior penalty DG methods\label{sec:ipdg}}

Interior penalty discontinuous Galerkin (IP-DG) methods relax the continuity
requirement of the discretization space. For instance, instead of $[H^1(\Omega)]^d$, we
consider a larger function space with weaker continuity, such as
$[L^2(\Omega)]^d$ or $H(\div,\Omega)$. As previously mentioned, in order to
deal with the non-conformity, $C^0$-continuity is weakly enforced via the
introduction of a penalty term on the set of interior facets $\Gamma_I$ of the
mesh $\mesh$ that vanishes for $C^0$-continuous functions. Similarly,
the weak prescription of the Dirichlet BC $\bu = \bu_0$ on $\Gamma_D$ is
achieved by introducing a penalty term on $\Gamma_D$.

We consider the following SIPG formulation: 
\begin{equation} \label{eq:sipg_lhs}
\begin{split}
& a(\bv, \bu) = 
\sum_{K\in\mesh}\int_K \nabla \bv : \mathcal{F}^v(\nabla\bu) ~\md \bx\\
   &+ \sum_{e\in \Gamma_I\cup \Gamma_D} \int_{e}
\eta h_e^{-1} \avg{G^\top}\jump{\bv} : \jump{\bu}  
- \jump{\bv} : \avg{\mathcal{F}^v(\nabla \bu)}
- \avg{G^\top \nabla \bv} : \jump{\bu}
~\md s,
\end{split}
\end{equation}
\begin{equation} \label{eq:sipg_rhs}
L(\bv) = \int_\Omega \bv \cdot \bB ~\md\bx
+\int_{\Gamma_D} \eta h_e^{-1} G^\top (\bv \kron \bn) : (\bu_0 \kron \bn)
- G^\top \nabla \bv  : (\bu_0 \kron \bn)  ~\md s.
\end{equation}
Here $\mathcal{F}^v(\nabla\bu)$ is a linear viscous flux.  For the vector
Poisson equation, the viscous flux is given by $\mathcal{F}^v(\nabla\bu) =
\nabla \bu$.  For the primal formulation of linear elasticity, the viscous flux
corresponds to the stress tensor $\mathcal{F}^v(\nabla\bu) = \mu (\nabla\bu +
\nabla\bu^\top) + \lambda\nabla\cdot\bu\eye$.  For the mixed formulation of
linear elasticity, the $(1,1)$-block of the system has viscous flux
$\mathcal{F}^v(\nabla\bu) = \mu (\nabla\bu + \nabla\bu^\top)$.

From left to right, the terms in the surface integral in \eqref{eq:sipg_lhs} are
referred to as the penalty, consistency, and adjoint consistency terms.
The quantity $G$ is known as the homogeneity tensor,
\begin{equation}
G_{ijkl} = \frac{\partial }{\partial u_{k,l}} [\mathcal{F}^v(\nabla \bu)]_{ij},
\end{equation}
for which we define the adjoint product with $\nabla\bv$
\begin{equation}
[G^\top \nabla \bv]_{kl} = G_{ijkl} v_{i,j}.
\end{equation}
The average $\avg{\cdot}$ and jump $\jump{\cdot}$ operators are defined for
scalar, vector, and tensor arguments as follows. Let $e$ be a facet of the
mesh. For an interior facet, let $K^-$ and $K^+$ be the two mesh cells that
share it, and let $\bw^-$ and $\bw^+$ be the traces of a function $\bw$ on $e$
from $K^-$ and $K^+$, respectively. On each facet we define
\begin{equation}
\avg{\bw}_e = 
\begin{cases}
\frac{1}{2}(\bw^- + \bw^+) & e \in \Gamma_I,\\
\bw & \mbox{otherwise},
\end{cases}
\quad
\jump{\bw}_e = 
\begin{cases}
\bw^- \otimes \bn^- + \bw^+ \otimes \bn^+ & e \in \Gamma_I,\\
\bw \otimes \bn & \mbox{otherwise}.
\end{cases}
\end{equation}
In order to ensure coercivity of $a(\cdot,\cdot)$ as we do $h$ or $p$ refinement,
the penalty term must be sufficiently large. 
The penalty coefficient $\eta h_e^{-1}$ must be chosen inversely
proportional to the normal spacing of GLL nodes near the facet, i.e.\
$\eta = \bigo{p(p+1)}$ \cite{manzanero18}. 
For the reciprocal length scale in the direction normal to facet $e$ we use
\begin{equation} 
h_e^{-1} \coloneqq \abs{e} \avg{\abs{K}^{-1}}_e.
\end{equation}

The stiffness matrix that corresponds to $a(\cdot,\cdot)$ in the SIPG formulation 
is obtained via direct stiffness summation over the cells and facets:
\begin{equation}
A = \sum_{K\in \mesh}R_K^\top A^K R_K + \sum_{e\in \Gamma_I \cup \Gamma_D} R_e^\top A^e R_e, 
\end{equation}
where 
$A^K$ is the cell matrix discretizing the volume integral in $K$,
$R_K$ is the Boolean restriction onto the DOFs of $K$,
$A^e$ is the facet matrix discretizing the surface integral on $e$, and
$R_e$ is Boolean restriction onto the DOFs of the cells sharing facet $e$. 

To illustrate the extension of our approach to the SIPG discretization, 
we consider again the scalar Poisson equation. The discrete
problem is to find $u_h \in V_h = \mathrm{DQ}_p(\Omega) \subset L^2(\Omega)$.
On Cartesian cells, both $A^K$ and $A^e$ have a tensor product structure of the
form \eqref{eq:kron}, with matrices of operators on the interval that can be
sparsified by the FDM. To illustrate this, suppose that, for $e\in \Gamma_I$,
$R_e$ reorders the DOFs such that the cells $K^-$ and $K^+$ share $e$ along the
$d$-th reference coordinate axis, while leaving the other axes
consistently oriented on both cells. The facet matrices are
\begin{equation}
A^e = \begin{cases}
E^e \kron \hat{B} & \mbox{if~} d=2,\\
E^e \kron \hat{B} \kron \hat{B} & \mbox{if~} d=3,
\end{cases}
\end{equation}
where the interval facet matrix $E^e$ is defined in terms of the
coefficients $\mu^K_j$ appearing in \eqref{eq:kron}, the 1D shape functions
$\{\hat{\phi}_j\}$, and their normal derivatives $\frac{\partial}{\partial n}
\hat{\phi}_j$ on $\partial \hat{\mathcal{I}}$ (the usual derivative with a
sign).  When $e\in\Gamma_D$, $E^e \in \reals^{(p+1)\times (p+1)}$ is given by
\begin{equation}
\label{eq:dirichlet_stiffness}
[E^e]_{ij} = \mu^e \left( 
\eta \hat{\phi}_i(\hat{x}^e) \hat{\phi}_j(\hat{x}^e)
- \hat{\phi}_i(\hat{x}^e) \tfrac{\partial}{\partial n} \hat{\phi}_j(\hat{x}^e)
- \tfrac{\partial}{\partial n} \hat{\phi}_i(\hat{x}^e) \hat{\phi}_j(\hat{x}^e)
\right).
\end{equation}
Here $\mu^e = \mu^K_l$, where $\hat{x}_l$ is the reference coordinate normal to
$e$, and $\hat{x}^e \in \partial\hat{\mathcal{I}}$ describes the facet $e$ as
the image of the plane $\hat{x}_l = \hat{x}^e$ under $F_K$. When
$e\in\Gamma_I$, $E^e$ is a $2\times 2$ block matrix with blocks $E^e_{rs}\in
\reals^{(p+1)\times (p+1)}$, $r,s \in \{0,1\}$, given by
\begin{equation}
\label{eq:interior_stiffness}
[E^e_{rs}]_{ij} = 
\tfrac{(-1)^{r-s}}{2}
\left(
\eta (\mu^e_0+\mu^e_1) \hat{\phi}_i(\hat{x}^e_r) \hat{\phi}_j(\hat{x}^e_s)
-\mu^e_{s} \hat{\phi}_i(\hat{x}^e_r) \tfrac{\partial}{\partial n}\hat{\phi}_j(\hat{x}^e_s)
-\mu^e_{r} \tfrac{\partial}{\partial n}\hat{\phi}_i(\hat{x}^e_r) \hat{\phi}_j(\hat{x}^e_s)
\right).
\end{equation}
Here $\mu^e_0=\mu^{K^-}_l, \mu^e_1 = \mu^{K^+}_m$, 
where $\hat{x}_l$ and $\hat{x}_m$ are the reference directions normal to $e$ 
on $K^-$ and $K^+$, respectively. Similarly, the facet $e$ is 
the image of $\hat{x}_l = \hat{x}^e_0$ under $F_{K^-}$ and that of
$\hat{x}_m = \hat{x}^e_1$ under $F_{K^+}$.

Some implementations of $\mathrm{DQ}_p$ do not feature an interior-interface
decomposition and use the Gau{\ss}--Legendre (GL) nodal shape functions.  The
GL nodes do not include the endpoints, thus all shape functions have non-zero
support at the facets, causing $E^e$ to be dense.  The matrices $E^e$ are
sparse for a basis with an interior-interface decomposition, such as the GLL
Lagrange polynomials $\{\ell_j\}$, the hierarchical Lobatto polynomials
$\{l_j\}$, and the FDM polynomials $\{\hat{s}_j\}$.  Since $\hat{\phi}_j(\pm
1)$ is non-zero for a single $j\in \{0,p\}$, each term in
\eqref{eq:interior_stiffness} and \eqref{eq:dirichlet_stiffness} corresponds to
a non-zero entry, a non-zero row, and a non-zero column of $E^e$, respectively,
as seen in Figure \ref{fig:ipdg_facet}.

\begin{figure}[H]
\centering
\includegraphics[width=0.24\textwidth,valign=c]{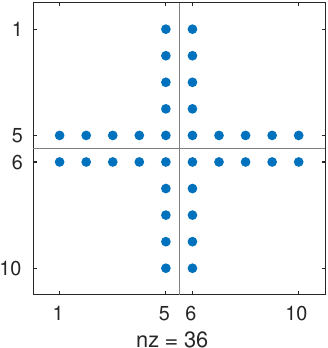}
\caption{
Non-zero structure for the interior facet matrix $E^e$ on the interval ($p=4$).}
\label{fig:ipdg_facet}
\end{figure}

Instead of diagonalizing the SIPG patch matrix as in
\cite{witte20}, our FDM-based approach produces a sparse matrix
with diagonal interior blocks on (possibly) unstructured vertex-star patches.
Figure \ref{fig:spy_ipdg} shows the sparsity pattern of the matrix for the SIPG
formulation of the Poisson equation on a Cartesian vertex-star patch in the FDM
basis, along with its Cholesky factor.  Here the matrix size is increased
from $(2p-1)^d$ DOFs in the CG case to $(2p+2)^d$. At low polynomial
degrees, the interface DOFs form a large fraction of the total number, but the
proportion decreases as $p$ increases. The computational complexity analysis of
Section \ref{sec:complexity} carries over to the SIPG case. 

\begin{figure}[tbhp]
\centering
\subfloat[$A_j$, $d=2$]{
   \includegraphics[height=0.23\textwidth,valign=c]{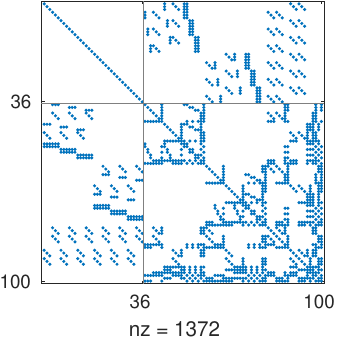}}
\hfill
\subfloat[$\mathrm{chol}(A_j)$, $d=2$]{
   \includegraphics[height=0.23\textwidth,valign=c]{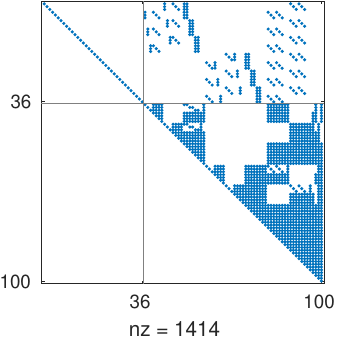}}
\hfill
\subfloat[$A_j$, $d=3$]{
   \includegraphics[height=0.23\textwidth,valign=c]{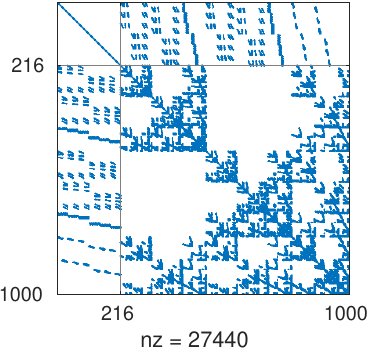}}
\hfill
\subfloat[$\mathrm{chol}(A_j)$, $d=3$]{
   \includegraphics[height=0.23\textwidth,valign=c]{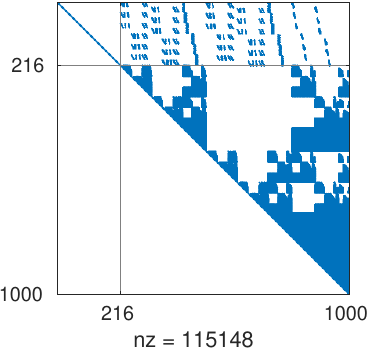}}

\caption{Non-zero structure of the SIPG stiffness matrix in the FDM basis $A_j
= L_jL_j^\top$ and its upper Cholesky factor $L_j^\top$ for the Poisson problem
on a Cartesian vertex-star patch with $p=4$.  Since the space is discontinuous,
the number of DOFs in a patch is increased.  The number of non-zeros in an
interior row is $3d+1$, since the interior DOFs are connected to each of the
$2d$ facets of their corresponding cell, plus $d$ more facets from the adjacent
cells.}
\label{fig:spy_ipdg}
\end{figure}

For a general viscous flux, we construct an auxiliary separable form by
expressing the cell integrals in terms of the reference coordinates
\begin{equation}
a_K(\bv, \bu) = 
\int_K G_{ijkl}^K
\frac{\partial v_i}{\partial x_j} 
\frac{\partial u_k}{\partial x_l} 
~\md\bx =
\int_{\hat{K}} \hat{G}_{ijkl}^K 
\frac{\partial}{\partial \hat{x}_j} \left(v_i\circ F_K \right)
\frac{\partial}{\partial \hat{x}_l} \left(u_k\circ F_K \right)
~\md\hat{\bx},
\end{equation}
where $\hat{G}^K$ is the homogeneity tensor in the reference coordinates,
\begin{equation}
\hat{G}_{ijkl}^K = 
\abs{\det(\Jac_K)}
[\Jac_K^{-1}]_{jm} 
[\Jac_K^{-1}]_{ln} 
G_{imkn}^K.
\end{equation}
The auxiliary form $\tilde{a}(\cdot, \cdot)$ is constructed by approximating $\hat{G}_{ijkl}^K$ with a
piecewise constant tensor that discards the entries where $i\neq k$ or $j\neq l$.  The
corresponding cell stiffness matrices become sparse in the FDM basis, and have a
similar form as \eqref{eq:kron}, except that the coefficients $\mu^K_j$ are
diagonal matrices that multiply each term through an additional
Kronecker product.  Hence, we expect that preconditioners based on the auxiliary form
to be limited by the coupling between vector components, by the mesh geometry,
and by how $G^K$ varies within $K$.

This approach carries over to non-Cartesian cells for DG discretizations of the
Poisson equation in the same way as the CG case.  Unfortunately, the extension
of our approach to vector-valued problems in $H(\div)$ on non-Cartesian cells
does not yield a good preconditioner. In this setting, we construct a block
diagonal preconditioner separating the components of the DOFs, which are in the
reference coordinates.  For the vector Poisson problem on non-Cartesian cells,
the Piola transform introduces off-diagonal contributions from the volume and
surface terms, which does not occur on non-Piola-mapped elements, such as
$[\mathrm{Q}_p]^d$ and $[\mathrm{DQ}_p]^d$.  The excluded terms are required
for the surface integral terms to vanish for arguments with $C^0$ continuity,
and without them the preconditioner might become indefinite on non-Cartesian
cells.

\subsection{Results for mixed formulations of linear elasticity\label{sec:mixed_results}}

We consider the same problem as in Table \ref{tab:primal_elasticity}, with
both a conforming $[\mathrm{Q}_p]^d \times \mathrm{DQ}_{p-2}$ discretization
and a non-conforming $\mathrm{RT}_{p} \times \mathrm{DQ}_{p-1}$ discretization.
For the $H(\div)$-conforming discretization,
the normal components of the Dirichlet BCs are enforced strongly, while
the tangential components of the BCs are weakly enforced with SIPG.
Enforcing the normal conditions strongly is crucial for achieving a divergence-free
solution in the Stokes limit $\lambda = \infty$.
For the penalty coefficient, we use $\eta = p(p+1)$.
We restrict our experiment to Cartesian cells, so that the FDM/sparse relaxation
is applicable to the $H(\div)$-conforming discretization.

We iteratively solve the discrete system $\eqref{eq:saddle}$ 
via MINRES with a symmetric positive definite block diagonal preconditioner,
\begin{equation}
   \mathcal{P}_\mathrm{diag} = 
\begin{bmatrix}
P_1 & 0\\
0 & P_2
\end{bmatrix}.
\end{equation}
Here $P_1$ is a preconditioner for the displacement block $A$, and $P_2$ is a
preconditioner for the scaled pressure mass matrix
$(\mu^{-1}+\lambda^{-1})M_p$.  For $P_1$ we employ the hybrid
$p$-multigrid/Schwarz method with the SDC/FDM/sparse relaxation and 
$[\mathrm{Q}_1]^d$ as the coarse space.  In our
tests, we discretize the pressure space with the GL basis, and 
employ point-Jacobi on the pressure mass matrix,
i.e.\ $P_2=(\mu^{-1}+\lambda^{-1})\diag(M_p)$. When $\mesh$ consists of
Cartesian cells, $M_p = \diag(M_p)$ in the GL basis.  The solver is illustrated
in Figure \ref{fig:solver_minres}.

\begin{figure}[tbhp]
\footnotesize
\centering
\begin{tikzpicture}[%
 every node/.style={draw=black, thick, anchor=west},
grow via three points={one child at (-0.0,-0.7) and
	two children at (0.0,-0.7) and (0.0,-1.4)},
edge from parent path={(\tikzparentnode.210) |- (\tikzchildnode.west)}]
\node {Krylov solver: MINRES}
child {node {Block-diagonal preconditioner $\mathcal{P}_\mathrm{diag}$}
   child {node {(1,1)-block: Hybrid $p$-multigrid/Schwarz V-cycle}
      child {node {Relaxation: SDC/FDM/sparse}}
      child {node {Coarse grid: Cholesky}}
   }
   child[missing]{}
   child[missing]{}
   child {node {(2,2)-block: Mass matrix preconditioner}
      child {node {Relaxation: point-Jacobi}}
   }
};
\end{tikzpicture}
\caption{Solver diagram for the mixed linear elasticity problem.}
\label{fig:solver_minres}
\end{figure}

In Table \ref{tab:minres} we present
MINRES iteration counts for the same configuration considered in Table
\ref{tab:primal_elasticity} in Section \ref{sec:primal}, using the
$[\mathrm{Q}_p]^d \times \mathrm{DQ}_{p-2}$ and $\mathrm{RT}_{p} \times
\mathrm{DQ}_{p-1}$ elements, respectively. Both discretizations yield robust
iteration counts with respect to $\lambda$; the iterations grow with the former
discretization much more quickly than the latter, especially in 3D.

\begin{table}[tbhp]
{
\footnotesize
\caption{
MINRES iteration counts for the mixed linear elasticity problem,
using the solver in Figure \ref{fig:solver_minres}.}
\label{tab:minres}
\begin{center}
\rowcolors{4}{}{gray!25}
\begin{tabular}{rr|*{5}{r}|*{5}{r}}
\toprule
& 
&\multicolumn{5}{c|}{$[\mathrm{Q}_p]^d \times \mathrm{DQ}_{p-2}$} 
&\multicolumn{5}{c}{$\mathrm{RT}_{p} \times \mathrm{DQ}_{p-1}$}
\\ 
$d$ & $p\setminus \lambda$ 
& $10^0$ & $10^1$ & $10^2$ & $10^3$ & $\infty$ 
& $10^0$ & $10^1$ & $10^2$ & $10^3$ & $\infty$ 
\\
\midrule
2 & 3 & 28 & 40 & 43 & 43 & 43 & 25 & 36 & 39 & 40 & 40\\
  & 7 & 31 & 45 & 50 & 51 & 51 & 28 & 40 & 43 & 45 & 45\\
  &15 & 34 & 50 & 57 & 57 & 57 & 30 & 43 & 48 & 48 & 48\\
  &31 & 36 & 53 & 64 & 65 & 65 & 31 & 45 & 51 & 51 & 51\\
\midrule
3 & 3 & 44 & 67 & 75  & 76  & 76  & 34 & 50 & 55 & 56 & 56\\
  & 7 & 50 & 83 & 96  & 97  & 98  & 39 & 58 & 63 & 65 & 65\\
  &15 & 53 & 88 & 111 & 118 & 119 & 41 & 63 & 70 & 70 & 70\\
\bottomrule
\end{tabular}
\end{center}
}
\end{table}

The solver configuration shown in Figure \ref{fig:solver_minres} is optimized
for memory usage, employing a block diagonal preconditioner so that the
short-term recurrences of MINRES may be exploited. If one is willing to trade
memory for time, one may consider an alternative configuration shown in Figure 
\ref{fig:solver_gmres} employing right-preconditioned GMRES \cite{saad86} with a
block upper triangular preconditioner, 
\begin{equation}
\mathcal{P}_\mathrm{upper} = 
\begin{bmatrix}
P_1 & B^\top \\
0 & -P_2 
\end{bmatrix},
\end{equation}
which requires a single application each of $P_1^{-1}$, $P_2^{-1}$, and $B^\top$ per GMRES iteration.
The GMRES iteration counts are presented in Table \ref{tab:gmres}.

\begin{figure}[tbhp]
\footnotesize
\centering
\begin{tikzpicture}[%
 every node/.style={draw=black, thick, anchor=west},
grow via three points={one child at (-0.0,-0.7) and
	two children at (0.0,-0.7) and (0.0,-1.4)},
edge from parent path={(\tikzparentnode.210) |- (\tikzchildnode.west)}]
   \node {Krylov solver: GMRES(30)}
child {node {Block upper triangular preconditioner $\mathcal{P}_\mathrm{upper}$}
   child {node {(1,1)-block: Hybrid $p$-multigrid/Schwarz V-cycle}
      child {node {Relaxation: SDC/FDM/sparse}}
      child {node {Coarse grid: Cholesky}}
   }
   child[missing]{}
   child[missing]{}
   child {node {(2,2)-block: Mass matrix preconditioner}
      child {node {Relaxation: point-Jacobi}}
   }
};
\end{tikzpicture}
\caption{Solver diagram for the mixed linear elasticity problem that trades memory for iteration counts.}
\label{fig:solver_gmres}
\end{figure}

\begin{table}[tbhp]
{
\footnotesize
\caption{ 
GMRES iteration counts for the mixed linear elasticity problem, using the solver in
Figure \ref{fig:solver_gmres}.}
\label{tab:gmres}
\begin{center}
\rowcolors{4}{}{gray!25}
\begin{tabular}{rr|*{5}{r}|*{5}{r}}
\toprule
& 
&\multicolumn{5}{c|}{$[\mathrm{Q}_p]^d \times \mathrm{DQ}_{p-2}$} 
&\multicolumn{5}{c}{$\mathrm{RT}_{p} \times \mathrm{DQ}_{p-1}$}
\\ 
$d$ & $p\setminus \lambda$ 
& $10^0$ & $10^1$ & $10^2$ & $10^3$ & $\infty$ 
& $10^0$ & $10^1$ & $10^2$ & $10^3$ & $\infty$ 
\\
\midrule
2 & 3 & 17 & 23 & 25 & 26 & 26 & 14 & 20 & 22 & 22 & 22\\
  & 7 & 18 & 25 & 27 & 28 & 28 & 16 & 22 & 24 & 24 & 24\\
  &15 & 20 & 27 & 33 & 34 & 34 & 17 & 23 & 26 & 26 & 26\\
  &31 & 21 & 30 & 38 & 38 & 39 & 18 & 24 & 28 & 28 & 28\\
\midrule
3 & 3 & 24 & 32 & 33 & 34 & 34 & 17 & 23 & 25 & 25 & 25\\
  & 7 & 27 & 35 & 38 & 38 & 38 & 21 & 26 & 29 & 29 & 29\\
  &15 & 28 & 38 & 44 & 46 & 46 & 22 & 28 & 31 & 31 & 32\\
\bottomrule
\end{tabular}
\end{center}
}
\end{table}

In Table \ref{tab:beam} we study the performance of our solver on an
unstructured mesh.  We consider the $[\mathrm{Q}_p]^d \times \mathrm{DQ}_{p-2}$
discretization of incompressible linear elasticity ($\lambda = \infty$).  We
prescribe $\mu=1$, a uniform downwards body force $\bB=-0.02\be_2$, and
homogeneous Dirichlet BCs on the displacement on the holes of the domain.  The
three-dimensional mesh is obtained via extrusion by 16 layers of the
two-dimensional mesh.  The iteration counts follow the same pattern as before
for this element: they are not $p$-robust as expected, but they remain modest
even at very high degrees.

\begin{table}[tbhp]
{
\footnotesize
\caption{
GMRES iteration counts for the mixed formulation of the incompressible linear
elasticity problem on the unstructured mesh shown here, using the solver in
Figure \ref{fig:solver_gmres}. 
}
\label{tab:beam}
\begin{center}
\rowcolors{1}{}{gray!25}
\begin{tabular}{rr|*{2}{r}}
\toprule
$d$ & $p$ & \# DOFs & Iter. \\
\midrule
 2 & 3 &    17 466& 26 \\
   & 7 &   104 250& 29 \\
   &15 &   499 002& 35 \\
   &31 & 2 173 242& 44 \\
\midrule
 3 & 3 &   588 927& 39 \\
   & 7 & 7 876 575& 47 \\
   &11 &31 236 927& 51 \\
\bottomrule
\end{tabular}
\quad
\raisebox{-0.5\height}{\includegraphics[width=0.5\linewidth]{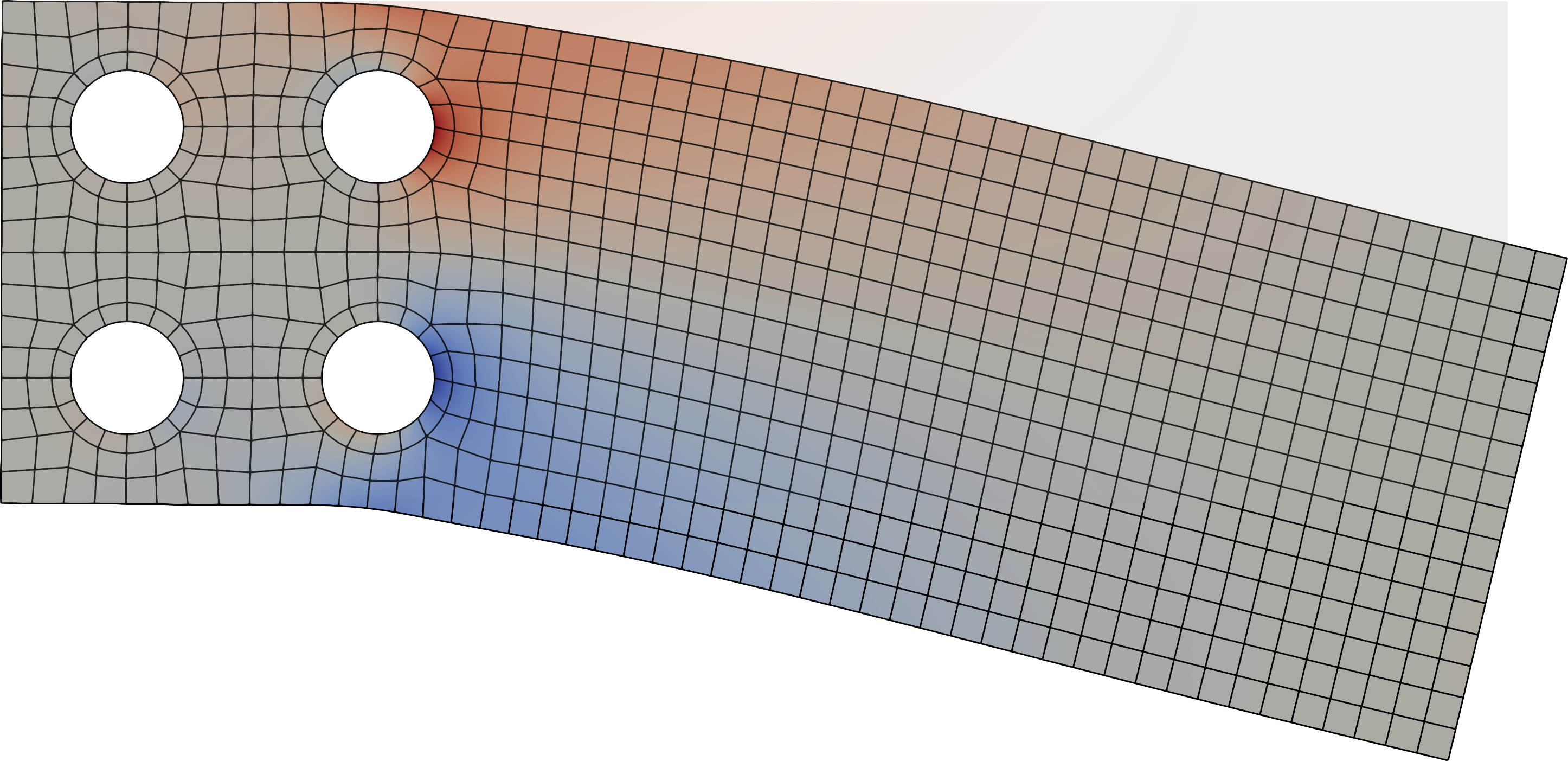}}
\end{center}
}
\end{table}

\section{Conclusion\label{sec:conclusion}}

We have introduced a fast relaxation method required for unstructured
vertex-centered patch problems arising in Pavarino's approach, extending its
practicality to much higher polynomial degrees.  Our method relies on a
spectrally equivalent form, constructed such that it is separable in the
reference coordinates.  We show promising results for the Poisson equation and
mixed formulations of linear elasticity.  A downside of the approach is its
narrow applicability; it will not be effective on more general problems,
especially for those where the dominant terms include mixed derivatives and
mixed vector components. In addition, our method relies on having a good
quality mesh, with its performance depending on the minimal angle; however,
mesh generators with guarantees on the minimal angle are available in two
dimensions~\cite{liang11}.  So far, we have only considered
constant-coefficient problems, but the theory of \cite{axelsson09} suggests
that our approach would remain effective for spatially varying coefficients.
Work in progress shows that the memory bandwidth limitations 
and the suboptimal complexity of the sparse Cholesky
factorization can be overcome by iterative patch solvers, such as
incomplete factorizations and algebraic multigrid.

\bibliographystyle{siamplain}
\bibliography{references}

\begin{thebibliography}{10}

\bibitem{ainsworth00}
{\sc M.~Ainsworth, P.~Coggins, and B.~Senior}, {\em Mixed $hp$-finite element
  methods for incompressible flow}, Chapman and Hall CRC Research Notes in
  Mathematics,  (2000), pp.~1--20.

\bibitem{mumps01}
{\sc P.~R. Amestoy, I.~S. Duff, J.-Y. L'Excellent, and J.~Koster}, {\em A fully
  asynchronous multifrontal solver using distributed dynamic scheduling}, SIAM
  J. Matrix Anal. Appl., 23 (2001), pp.~15--41.

\bibitem{arnold82}
{\sc D.~N. Arnold}, {\em An interior penalty finite element method with
  discontinuous elements}, {SIAM} J. Numer. Anal., 19 (1982), pp.~742--760.

\bibitem{arnold05}
{\sc D.~N. Arnold, D.~Boffi, and R.~S. Falk}, {\em {Quadrilateral $H$(div)
  finite elements}}, {SIAM} J. Numer. Anal., 42 (2005), pp.~2429--2451.

\bibitem{uzawa58}
{\sc K.~J. Arrow, L.~Hurwicz, and H.~Uzawa}, {\em {Studies in Non-linear
  Programming}}, Stanford University Press, 1958.

\bibitem{axelsson09}
{\sc O.~Axelsson and J.~Kar{\'a}tson}, {\em Equivalent operator preconditioning
  for elliptic problems}, Numer. Algorithms, 50 (2009), pp.~297--380.

\bibitem{babuska73}
{\sc I.~Babu{\v{s}}ka}, {\em The finite element method with {L}agrangian
  multipliers}, Numer. Math., 20 (1973), pp.~179--192.

\bibitem{petsc-user-ref}
{\sc S.~Balay, S.~Abhyankar, M.~F. Adams, J.~Brown, P.~Brune, K.~Buschelman,
  L.~Dalcin, V.~Eijkhout, W.~D. Gropp, D.~Karpeyev, D.~Kaushik, M.~G. Knepley,
  D.~A. May, L.~C. McInnes, R.~T. Mills, T.~Munson, K.~Rupp, P.~Sanan, B.~F.
  Smith, S.~Zampini, H.~Zhang, and H.~Zhang}, {\em {PETS}c users manual}, Tech.
  Report ANL-95/11 - Revision 3.11, Argonne National Laboratory, 2019.

\bibitem{bello19}
{\sc P.~D. Bello-Maldonado and P.~F. Fischer}, {\em Scalable low-order finite
  element preconditioners for high-order spectral element {P}oisson solvers},
  SIAM J. Sci. Comput., 41 (2019), pp.~S2--S18.

\bibitem{bernardi87}
{\sc C.~Bernardi, Y.~Maday, and B.~M{\'e}tivet}, {\em Spectral approximation of
  the periodic-nonperiodic {N}avier-{S}tokes equations}, Numer. Math., 51
  (1987), pp.~655--700.

\bibitem{blaheta94}
{\sc R.~Blaheta}, {\em Displacement decomposition—incomplete factorization
  preconditioning techniques for linear elasticity problems}, Numer. Lin.
  Algebra Appl., 1 (1994), pp.~107--128.

\bibitem{brezzi74}
{\sc F.~Brezzi}, {\em On the existence, uniqueness and approximation of
  saddle-point problems arising from {L}agrangian multipliers}, ESAIM Math.
  Model. Num., 8 (1974), pp.~129--151.

\bibitem{canuto94}
{\sc C.~Canuto}, {\em Stabilization of spectral methods by finite element
  bubble functions}, Comput. Methods Appl. Mech. Engrg., 116 (1994),
  pp.~13--26.

\bibitem{davis08}
{\sc Y.~Chen, T.~A. Davis, W.~W. Hager, and S.~Rajamanickam}, {\em {Algorithm
  887: CHOLMOD, Supernodal Sparse Cholesky Factorization and Update/Downdate}},
  ACM Trans. Math. Softw., 35 (2008).

\bibitem{cockburn07}
{\sc B.~Cockburn, G.~Kanschat, and D.~Sch{\"o}tzau}, {\em A note on
  discontinuous {G}alerkin divergence-free solutions of the {N}avier--{S}tokes
  equations}, J. Sci. Comput., 31 (2007), pp.~61--73.

\bibitem{couzydeville95}
{\sc W.~Couzy and M.~O. Deville}, {\em {A fast Schur complement method for the
  spectral element discretization of the incompressible Navier-Stokes
  equations}}, J. Comp. Phys., 116 (1995), pp.~135 -- 142.

\bibitem{diosady19}
{\sc L.~T. Diosady and S.~M. Murman}, {\em Scalable tensor-product
  preconditioners for high-order finite-element methods: {S}calar equations},
  J. Comp. Phys., 394 (2019), pp.~759--776.

\bibitem{dryja87}
{\sc M.~Dryja and O.~Widlund}, {\em An additive variant of the {S}chwarz
  alternating method for the case of many subregions}, Technical Report 339,
  Ultracomputer Note 131, Department of Computer Science, Courant Institute,
  1987.

\bibitem{pcpatch}
{\sc P.~E. Farrell, M.~G. Knepley, L.~Mitchell, and F.~Wechsung}, {\em
  {PCPATCH: Software for the Topological Construction of Multigrid Relaxation
  Methods}}, ACM Trans. Math. Softw., 47 (2021).

\bibitem{fischer00}
{\sc P.~F. Fischer, H.~M. Tufo, and N.~I. Miller}, {\em {An overlapping Schwarz
  method for spectral element simulation of three-dimensional incompressible
  flows}}, in Parallel Solution of Partial Differential Equations, Springer,
  2000, pp.~159--180.

\bibitem{george73}
{\sc A.~George}, {\em Nested dissection of a regular finite element mesh},
  {SIAM} J. Numer. Anal., 10 (1973), pp.~345--363.

\bibitem{gustafsson98}
{\sc I.~Gustafsson and G.~Lindskog}, {\em {On parallel solution of linear
  elasticity problems: Part I: theory}}, Numer. Lin. Algebra Appl., 5 (1998),
  pp.~123--139.

\bibitem{homolya17}
{\sc M.~Homolya, R.~C. Kirby, and D.~A. Ham}, {\em Exposing and exploiting
  structure: optimal code generation for high-order finite element methods},
  arXiv preprint arXiv:1711.02473,  (2017).

\bibitem{homolya18}
{\sc M.~Homolya, L.~Mitchell, F.~Luporini, and D.~A. Ham}, {\em {TSFC}: a
  structure-preserving form compiler}, SIAM J. Sci. Comput., 40 (2018),
  pp.~C401--C428.

\bibitem{huismann19}
{\sc I.~Huismann, J.~Stiller, and J.~Fr{\"o}hlich}, {\em {Scaling to the
  stars--a linearly scaling elliptic solver for $p$-multigrid}}, J. Comp.
  Phys., 398 (2019), p.~108868.

\bibitem{karniadakis13}
{\sc G.~Karniadakis and S.~Sherwin}, {\em Spectral/$hp$ element methods for
  computational fluid dynamics}, Oxford University Press, 2013.

\bibitem{kershaw81}
{\sc D.~S. Kershaw}, {\em {Differencing of the diffusion equation in Lagrangian
  hydrodynamic codes}}, J. Comp. Phys., 39 (1981), pp.~375--395.

\bibitem{kirby04}
{\sc R.~C. Kirby}, {\em {Algorithm 839: FIAT, a new paradigm for computing
  finite element basis functions}}, ACM Trans. Math. Soft., 30 (2004),
  pp.~502--516.

\bibitem{kronbichler18}
{\sc M.~Kronbichler and W.~A. Wall}, {\em {A Performance Comparison of
  Continuous and Discontinuous Galerkin Methods with Fast Multigrid Solvers}},
  SIAM J. Sci. Comput., 40 (2018), pp.~A3423--A3448.

\bibitem{lanczos50}
{\sc C.~Lanczos}, {\em An iteration method for the solution of the eigenvalue
  problem of linear differential and integral operators},  (1950).

\bibitem{lederer18}
{\sc P.~L. Lederer and J.~Sch{\"o}berl}, {\em {Polynomial robust stability
  analysis for $H(\mathrm{div})$-conforming finite elements for the Stokes
  equations}}, IMA J. Numer. Anal., 38 (2018), pp.~1832--1860.

\bibitem{liang11}
{\sc X.~Liang and Y.~Zhang}, {\em Hexagon-based all-quadrilateral mesh
  generation with guaranteed angle bounds}, Comput. Methods Appl. Mech. Engrg.,
  200 (2011), pp.~2005--2020.

\bibitem{lottes05}
{\sc J.~W. Lottes and P.~F. Fischer}, {\em {Hybrid multigrid/Schwarz algorithms
  for the spectral element method}}, J. Sci. Comput., 24 (2005), pp.~45--78.

\bibitem{lynch64}
{\sc R.~E. Lynch, J.~R. Rice, and D.~H. Thomas}, {\em Direct solution of
  partial difference equations by tensor product methods}, Numer. Math., 6
  (1964), pp.~185--199.

\bibitem{maday93}
{\sc Y.~Maday, D.~Meiron, A.~T. Patera, and E.~M. R{\o}nquist}, {\em Analysis
  of iterative methods for the steady and unsteady {S}tokes problem:
  Application to spectral element discretizations}, SIAM J. Sci. Comput., 14
  (1993), pp.~310--337.

\bibitem{maitre96}
{\sc J.-F. Maitre and O.~Pourquier}, {\em Condition number and diagonal
  preconditioning: comparison of the $p$-version and the spectral element
  methods}, Numer. Math., 74 (1996), pp.~69--84.

\bibitem{manzanero18}
{\sc J.~Manzanero, A.~M. Rueda-Ram{\'\i}rez, G.~Rubio, and E.~Ferrer}, {\em
  {The Bassi Rebay 1 scheme is a special case of the symmetric interior penalty
  formulation for discontinuous Galerkin discretisations with Gauss--Lobatto
  points}}, J. Comp. Phys., 363 (2018), pp.~1--10.

\bibitem{munkres1984}
{\sc J.~R. Munkres}, {\em {Elements of Algebraic Topology}}, CRC Press, 1984.

\bibitem{nedelec80}
{\sc J.-C. N{\'e}d{\'e}lec}, {\em Mixed finite elements in $\mathbb{R}^3$},
  Numer. Math., 35 (1980), pp.~315--341.

\bibitem{orszag80}
{\sc S.~A. Orszag}, {\em Spectral methods for problems in complex geometries},
  J. Comp. Phys., 37 (1980), pp.~70 -- 92.

\bibitem{paige75}
{\sc C.~C. Paige and M.~A. Saunders}, {\em Solution of sparse indefinite
  systems of linear equations}, {SIAM} J. Numer. Anal., 12 (1975),
  pp.~617--629.

\bibitem{pavarino93}
{\sc L.~F. Pavarino}, {\em Additive {S}chwarz methods for the $p$-version
  finite element method}, Numer. Math., 66 (1993), pp.~493--515.

\bibitem{pavarino07}
{\sc L.~F. Pavarino, E.~Zampieri, R.~Pasquetti, and F.~Rapetti}, {\em
  {Overlapping Schwarz methods for Fekete and Gauss--Lobatto spectral
  elements}}, 29 (2007), pp.~1073--1092.

\bibitem{pazner20}
{\sc W.~Pazner}, {\em Efficient low-order refined preconditioners for
  high-order matrix-free continuous and discontinuous {G}alerkin methods}, SIAM
  J. Sci. Comput., 42 (2020), pp.~A3055--A3083.

\bibitem{pazner18}
{\sc W.~Pazner and P.-O. Persson}, {\em Approximate tensor-product
  preconditioners for very high order discontinuous {Galerkin} methods}, J.
  Comp. Phys., 354 (2018), pp.~344--369.

\bibitem{pazner18ip}
{\sc W.~Pazner and P.-O. Persson}, {\em Interior penalty tensor-product
  preconditioners for high-order discontinuous {G}alerkin discretizations}, in
  2018 AIAA Aerospace Sciences Meeting, 2018, p.~1093.

\bibitem{Rathgeber2016}
{\sc F.~Rathgeber, D.~A. Ham, L.~Mitchell, M.~Lange, F.~Luporini, A.~T.~T.
  McRae, G.-T. Bercea, G.~R. Markall, and P.~H.~J. Kelly}, {\em Firedrake:
  automating the finite element method by composing abstractions}, ACM Trans.
  Math. Soft., 43 (2016), pp.~24:1--24:27.

\bibitem{remacle15}
{\sc J.-F. Remacle, R.~Gandham, and T.~Warburton}, {\em {GPU} accelerated
  spectral finite elements on all-hex meshes}, J. Comp. Phys., 324 (2016),
  pp.~246--257.

\bibitem{saad86}
{\sc Y.~Saad and M.~H. Schultz}, {\em {GMRES}: a generalized minimal residual
  algorithm for solving nonsymmetric linear systems}, SIAM J. Sci. Stat.
  Comput., 7 (1986), pp.~856--869.

\bibitem{schotzau98}
{\sc D.~Sch{\"o}tzau and C.~Schwab}, {\em Mixed $hp$-{FEM} on anisotropic
  meshes}, Math. Models Methods Appl. Sci., 8 (1998), pp.~787--820.

\bibitem{silvester94}
{\sc D.~Silvester and A.~Wathen}, {\em Fast iterative solution of stabilised
  {S}tokes systems {P}art {II}: using general block preconditioners}, {SIAM} J.
  Numer. Anal., 31 (1994), pp.~1352--1367.

\bibitem{stiller17}
{\sc J.~Stiller}, {\em {Robust multigrid for Cartesian interior penalty DG
  formulations of the Poisson equation in 3D}}, in {Spectral and High Order
  Methods for Partial Differential Equations ICOSAHOM 2016}, M.~L. Bittencourt,
  N.~A. Dumont, and J.~S. Hesthaven, eds., Cham, 2017, Springer International
  Publishing, pp.~189--201.

\bibitem{szabo91}
{\sc B.~Szab{\'o} and I.~Babu{\v{s}}ka}, {\em Finite element analysis}, John
  Wiley \& Sons, 1991.

\bibitem{taylor73}
{\sc C.~Taylor and P.~Hood}, {\em A numerical solution of the {N}avier-{S}tokes
  equations using the finite element technique}, Comput. Fluids, 1 (1973),
  pp.~73--100.

\bibitem{witte20}
{\sc J.~Witte, D.~Arndt, and G.~Kanschat}, {\em Fast tensor product {S}chwarz
  smoothers for high-order discontinuous {G}alerkin methods}, J. Comput.
  Methods Appl. Math.,  (2020).

\bibitem{xu96}
{\sc J.~Xu}, {\em The auxiliary space method and optimal multigrid
  preconditioning techniques for unstructured grids}, Computing, 56 (1996),
  pp.~215--235.

\end{thebibliography}
\end{document}